\newtheorem*{rep@theorem}{\rep@title}
\newcommand{\newreptheorem}[2]{%
\newenvironment{rep#1}[1]{%
 \def\rep@title{#2 \ref{##1}}%
 \begin{rep@theorem}}%
 {\end{rep@theorem}}}
\newtheorem{intro_thm}{Theorem}
\newtheorem{lemma}{Lemma}[section]
\newtheorem{prop}[lemma]{Proposition}
\theoremstyle{definition}
\newtheorem{defn}[lemma]{Definition}
\newtheorem{example}[lemma]{Example}
\newtheorem{rem}[lemma]{Remark}
\theoremstyle{definition}
\numberwithin{equation}{section}
\renewcommand{\epsilon}{\varepsilon}
\renewcommand{\hat}{\widehat}
\renewcommand{\phi}{\varphi}
\newcommand{\N}{\mathbb{N}}
\newcommand{\R}{\mathbb{R}}
\newcommand{\comp}{\mathrm{comp}}
\newcommand{\alt}{\textup{alt}}
\newcommand{\acts}{\curvearrowright}
\newcommand{\Aut}{\textup{Aut}}
\newcommand{\K}{\mathcal{K}}
\newcommand{\Elle}{\mathcal{L}}
\newcommand{\A}{\mathcal{A}}
\newcommand{\U}{\mathcal{U}}
\newcommand{\V}{\mathcal{V}}
\newcommand{\mult}{\mathrm{mult}}
\newcommand{\im}{\mathrm{im}}
\newcommand{\twprod}{\mathbin{%
		\ooalign{\raise1.15ex\hbox{$\scriptstyle\sim$}\cr\hidewidth$\times$\hidewidth\cr}%
}}
\newcommand{\twprodboth}{\mathbin{%
		\ooalign{\raise1.15ex\hbox{$\scriptstyle(\sim)$}\cr\hidewidth$\times$\hidewidth\cr}%
}}
\DeclareMathAlphabet{\mathcal}{OMS}{cmsy}{m}{n}
\long\def\forget#1{}
\begin{document}

\title{Amenable covers and relative bounded cohomology}

\author[]{Pietro Capovilla}
\address{Scuola Normale Superiore, Pisa Italy}
\email{pietro.capovilla@sns.it}

\thanks{}

\keywords{amenable groups, bounded cohomology, multicomplexes}
\subjclass[1000]{55N10, 55U10, 57N65}
\begin{abstract}
	We establish a relative version of Gromov's Vanishing Theorem in the presence of amenable open covers with small multiplicity, extending a result of Li, L{\"o}h and Moraschini. Our approach relies on Gromov's theory of multicomplexes.
\end{abstract}
\maketitle
\section{Introduction}
Let $X$ be a topological space.
Gromov's Vanishing Theorem \cite{gromov1982volume} affirms that, if $X$ admits an amenable open cover of multiplicity $k$, then the comparison map $H^n_b(X;\R)\rightarrow H^n(X;\R)$ from bounded cohomology to singular cohomology of $X$ vanishes in every degree $n\geq k$.
Several proofs of this result are available in the literature, whose techniques range between Gromov's theory of multicomplexes \cite{gromov1982volume, frigerio2022amenable, FM23}, sheaf theory \cite{ivanov1985foundations, Iva17}, equivariant nerves and classifying spaces for families \cite{LS20} and homotopy theory \cite{Rap24}.
A generalization of Gromov's result to the relative setting has been provided by Li, L{\"o}h and Moraschini in \cite{li2022bounded} and by Raptis in \cite{Rap24}.
In this paper we extend the result of Li, L{\"o}h and Moraschini in two directions, following Gromov's approach via multicomplexes.
We refer the reader to Remark \ref{rem: connection with li loeh and moraschini} for a detailed discussion on the relationship between our results and the ones in \cite{li2022bounded}.
A pair of topological spaces $(X,A)$ is called \emph{triangulable} if there exists a pair of simplicial complexes $(T,S)$ such that $(X,A)=(|T|,|S|)$.
\begin{intro_thm}
	\label{thm: main theorem}
	Let $(X,A)$ be a triangulable pair and assume that the kernel of the morphism $\pi_1(A\hookrightarrow X, x)$ is amenable for every $x \in A$.
	Let $\U$ be an amenable cover of $X$ by path-connected open subsets such that:
	\begin{enumerate}
		\item[$(\mathrm{RC1})$] For every $U\in \U$, $U\cap A$ is path-connected;
		\item[$(\mathrm{RC2})$] For every $U\in \U$, the inclusion
		\[
		\im(\pi_1(U\cap A,x) \rightarrow \pi_1(X,x))\hookrightarrow \im(\pi_1(U,x)\rightarrow \pi_1(X,x))
		\]
		is an isomorphism for every $x \in U\cap A$. 
	\end{enumerate}
	Then the comparison map $\comp^n\colon H^n_b(X,A;\R)\rightarrow H^n(X,A;\R)$ vanishes for every $n \geq \mult(\U)$.
\end{intro_thm}
\begin{rem}
	\label{rem: RC2 is empty}
	In the statement of (RC1) we follow the convention that the empty set is path-connected. Moreover, (RC2) is empty when $\U$ consists of \emph{$\pi_1$-contractible} subsets (for which the map $\pi_1(U,x)\rightarrow \pi_1(X,x)$ is trivial for every $U \in \U$ and every $x \in U$).
\end{rem}
\begin{intro_thm}
	\label{thm: relative vanishing theorem: rel mult and nerves}
	Let $(X,A)$ be a triangulable pair and assume that the kernel of the morphism $\pi_1(A\hookrightarrow X, x)$ is amenable for every $x \in A$.
	Let $\U$ be a locally-finite amenable cover of $X$ by path-connected open subsets satisfying $(\mathrm{RC1})$ and $(\mathrm{RC2})$.
	\begin{enumerate}
		\item[$(1)$] If $\U$ is weakly-convex on $A$, then the comparison map $\comp^n$ vanishes for every $n\geq \mult_A(\U)$.
		\item[$(2)$] If $\U$ is convex, then for every $n \in \N$ there exists a map $\Theta^n$ such that the following diagram commutes
		\[
		\begin{tikzcd}
			{H^n_b(X,A;\R)} \arrow[r, "\comp^n"] \arrow[d, "\Theta^n"] & {H^n(X,A;\R)}                                      \\
			{H^n(N(\U),N(\U_{A});\R)} \arrow[r, "\cong"]                 & {H^n(|N(\U)|,|N(\U_{A})|;\R),} \arrow[u, "H^n(\nu)"']
		\end{tikzcd}
		\]
		where $N(\U)$ is the nerve of $\U$, $N(\U_A)$ is the nerve of the cover of $A$ induced by $\U$ and $\nu\colon (X,A)\rightarrow (|N(\U)|,|N(\U_A)|)$ is a nerve map.
	\end{enumerate}
\end{intro_thm}
We refer the reader to Subsection \ref{subsection: open covers} for the definitions of $\mult_A(\U)$, \emph{weak-convexity} and \emph{convexity}.
Since $\dim(N(\U), N(\U_A)) = \mult_A(\U) - 1$ \cite[Definition 4.4]{li2022bounded}, assuming convexity, we have that (2) implies (1).
\begin{rem}
	\label{rem: connection with li loeh and moraschini}
	Theorem \ref{thm: main theorem} and Theorem \ref{thm: relative vanishing theorem: rel mult and nerves} are natural extensions of the Relative Vanishing Theorem by Li, L{\"o}h and Moraschini \cite[Theorem 1.1]{li2022bounded}. Their result in fact deduces the same conclusions (1) and (2) of our Theorem \ref{thm: relative vanishing theorem: rel mult and nerves}, under the assumptions that $(X,A)$ is a CW-pair such that $A$ is $\pi_1$-injective in $X$ and $\U$ is a (not necessarily locally-finite) boundedly-acyclic open cover by path-connected subsets satisfying (RC1) and (RC2).
	Of course, triangulable pairs are in particular CW-pairs. However, not every CW-pair can be triangulated \cite[Corollary 4.6.12]{FP90}.
	Moreover, amenable open covers are in particular boundedly-acyclic, and it is not clear whether our results can be extended to this case.
	On the other hand, our assumption which requires the kernel of the map $\pi_1(A\hookrightarrow X)$ to be amenable is clearly more flexible than $\pi_1$-injectivity.
	We also underline the fact that, with respect to \cite{li2022bounded}, we need to assume $\U$ to be locally finite in Theorem \ref{thm: relative vanishing theorem: rel mult and nerves}.
\end{rem}

Via a standard duality argument \cite[Section 7.5]{frigerio2017bounded}, Theorem \ref{thm: main theorem} and Theorem \ref{thm: relative vanishing theorem: rel mult and nerves} imply in a straightforward way vanishing results for relative simplicial volume (see for example \cite[Corollary 6.15]{li2022bounded}). As already noticed in \cite[Section 6.4]{li2022bounded}, these results are however strictly weaker than the main available result for relative simplicial volume \cite[Theorem 3.13]{LMR2022simplicial}, which is based on Gromov's vanishing theorem for non-compact manifolds \cite[Corollary 11]{FM23}.
We conclude with a discussion on the optimality of our assumptions.
\begin{rem}
\label{rem: optimality of assumptions}
Let $S$ be an oriented compact connected surface of genus one with one boundary component. 
It is well known that the relative simplicial volume $\|S,\partial S\|$ of $S$ is positive \cite{gromov1982volume}, hence the comparison map $\comp^2\colon H^2_b(S,\partial S; \R)\rightarrow H^2(S, \partial S;\R)$ is non-zero \cite[Section 7.5]{frigerio2017bounded}.
Moreover, the boundary $\partial S$ is $\pi_1$-injective in $S$.
Therefore, the open covers of $S$ in Figure \ref{fig: optimality of assumtpions} show that the regularity assumptions (RC1) and (RC2) of Theorem \ref{thm: main theorem} are optimal. We refer to \cite[Remark 6.16]{li2022bounded} for a discussion on the optimality of the assumptions of Theorem \ref{thm: relative vanishing theorem: rel mult and nerves}.
\end{rem}
\begin{figure}[ht]
\includegraphics[scale=0.5]{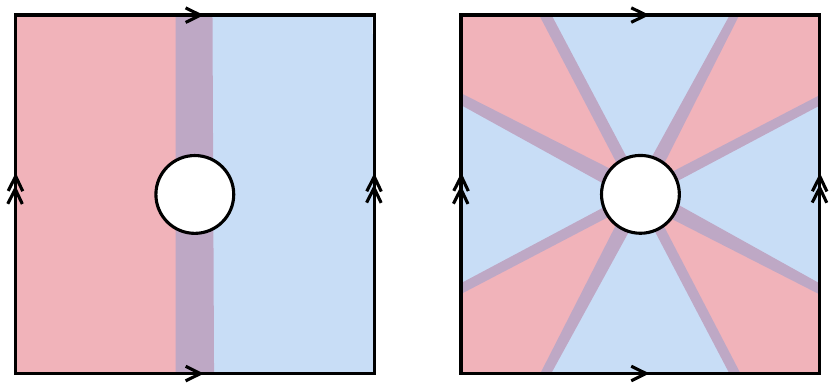}
\caption{These open covers of $S$ are amenable and have multiplicity 2. The one on the left satisfies (RC1), but not (RC2), the one on the right satisfies (RC2), but not (RC1).}
\label{fig: optimality of assumtpions}
\end{figure}
\subsection*{Acknowledgments}
We wish to thank Marco Moraschini for suggesting the topic of this paper. We are grateful to Roberto Frigerio for his guidance. We would also like to thank Federica Bertolotti, Filippo Bianchi, Kevin Li and Francesco Milizia for useful discussions.
\section{Preliminaries}
\label{sec: preliminaries}
\subsection{Open covers}
\label{subsection: open covers}
Let $X$ be a topological space.
An \emph{open cover} $\U$ of $X$ is a set of open subsets of $X$ such that $\bigcup_{U \in \U} U = X$.
An open cover $\U$ of $X$ is called \emph{amenable} if, for every $U \in \U$ and every $x \in U$, we have that $\im(\pi_1(U\hookrightarrow X, x))$ is an amenable subgroup of $\pi_1(X,x)$.
The \emph{multiplicity} of $\U$ is the supremum of the set of natural numbers $k \in \N_{\geq 1}$ such that $U_1\cap \dots \cap U_k \neq \emptyset $ for some pairwise distinct $U_1,\dots,U_k \in \U$.
We say that $\U$ is \emph{convex} if, for every $k \in \N$ and every $U_1,\dots,U_k \in \U$, the intersection $U_1\cap \dots \cap U_k$ is path-connected.
The \emph{nerve} $N(\U)$ of $\U$ is the simplicial complex whose set of vertices is the set $\U$ itself and pairwise-different $U_0,\dots, U_n \in \U$ span an $n$-simplex in $N(\U)$ if $U_0\cap \dots\cap U_n \neq \emptyset$. Of course, $\dim(N(\U))= \mult(\U)-1$.

Let $(X,A)$ be a pair of topological spaces.
The following notions were introduced by Li, L{\"o}h and Moraschini in \cite{li2022bounded}.
An open cover $\U$ of $X$ is called \emph{weakly-convex on $A$} if, for every $k \in \N$ and every $U_1,\dots,U_k \in \U$ with $U_1\cap \dots \cap U_k \cap A \neq \emptyset$, each path-connected component of $U_1\cap \dots \cap U_k$ intersects $A$.
Convex open covers are also weakly convex.
The \emph{relative multiplicity} $\mult_A(\U)$ of $\U$ (with respect to $A$) is defined as the supremum of the set of natural numbers $k \in \N_{\geq 1}$ such that $U_1\cap \dots \cap U_k \neq \emptyset$ and $U_1\cap \dots \cap U_k \cap A  = \emptyset$ for some pairwise distinct $U_1,\dots,U_k \in \U$. Of course, $\mult_A(\U)$ is a lower bound of $\mult(\U)$.

The regularity conditions (RC1) and (RC2) are used in our proof of Theorem \ref{thm: main theorem} via the following lemma.
\begin{lemma}
	\label{lemma: open covers with RC1 and RC2}
	Let $(X,A)$ be a pair of topological spaces and let $\U$ be an cover of $X$ by path-connected open subsets. Then $\U$ satisfies the following conditions:
	\begin{enumerate}
		\item[$(\mathrm{RC1})$] For every $U\in \U$, $U\cap A$ is path-connected; 
		\item[$(\mathrm{RC2})$] For every $U\in \U$, the inclusion
		\[
		\im(\pi_1(U\cap A,x) \rightarrow \pi_1(X,x))\hookrightarrow \im(\pi_1(U,x)\rightarrow \pi_1(X,x))
		\]
		is an isomorphism for every $x \in U\cap A$;
	\end{enumerate}
	if and only if, for every $U \in \U$ and for every path $\gamma\colon [0,1]\rightarrow U$ with endpoints in $U \cap A$, there exists a path $\lambda \colon [0,1]\rightarrow U\cap A$ such that $\lambda$ is homotopic in $X$ to $\gamma$ relative to the endpoints.
\end{lemma}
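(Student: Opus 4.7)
The statement is a biconditional, so I would prove the two implications separately. Both directions are essentially bookkeeping with paths and homotopies, and the only subtlety is being careful about whether the homotopies live in $U$, in $U\cap A$, or only in $X$.

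For the forward direction, assume (RC1) and (RC2), and let $\gamma\colon[0,1]\to U$ be a path with endpoints $x,y\in U\cap A$. By (RC1) I pick a path $\mu\colon[0,1]\to U\cap A$ from $y$ to $x$. Then $\gamma*\mu$ is a loop in $U$ based at $x$, and so defines a class in $\im(\pi_1(U,x)\to\pi_1(X,x))$. By (RC2) this subgroup coincides with $\im(\pi_1(U\cap A,x)\to\pi_1(X,x))$, so there is a loop $\alpha\colon[0,1]\to U\cap A$ based at $x$ whose image in $\pi_1(X,x)$ agrees with that of $\gamma*\mu$. Unwinding, $\gamma$ is homotopic in $X$, relative to the endpoints, to $\alpha*\bar{\mu}$, which is a path entirely contained in $U\cap A$. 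Setting $\lambda=\alpha*\bar{\mu}$ finishes this direction.

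For the backward direction, assume the path-replacement property. For (RC1), given $x,y\in U\cap A$, path-connectedness of $U$ furnishes a path $\gamma$ in $U$ from $x$ to $y$; the hypothesis then gives a path $\lambda$ in $U\cap A$ with the same endpoints, proving $U\cap A$ is path-connected. For (RC2), the inclusion is tautologically injective, so I only need surjectivity: given a loop $\gamma$ in $U$ based at some $x\in U\cap A$, apply the hypothesis to $\gamma$ (whose endpoints are both $x\in U\cap A$) to obtain a loop $\lambda$ in $U\cap A$ based at $x$ which is homotopic to $\gamma$ in $X$ rel $x$. Thus $[\gamma]$ and $[\lambda]$ coincide in $\pi_1(X,x)$, so every element of $\im(\pi_1(U,x)\to\pi_1(X,x))$ lies in $\im(\pi_1(U\cap A,x)\to\pi_1(X,x))$.

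I do not anticipate a real obstacle here; the argument is formal once one notices that (RC1) and (RC2) together say exactly ``a loop made from a $U$-path followed by a correcting $(U\cap A)$-path can itself be homotoped, in $X$, into $U\cap A$''. The only place demanding attention is making sure the concatenations and reversals of $\gamma,\mu,\alpha$ are ordered consistently, so that the final homotopy between $\gamma$ and $\lambda$ is genuinely relative to the endpoints $x,y$ and not merely a free homotopy.
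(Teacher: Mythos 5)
Your proof is correct and follows essentially the same route as the paper's: the forward direction is notationally identical (your $\mu$ is the paper's reversed $\varepsilon$), and you merely spell out the converse, which the paper dismisses as straightforward.
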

\begin{proof}
	Assume that $\U$ satisfies (RC1) and (RC2). 
	Given $U\in \U$ and $\gamma$ as above, by (RC1) there exists a path $\varepsilon \colon [0,1]\rightarrow U\cap A$ from $\gamma(0)$ to $\gamma(1)$. The homotopy class of the loop $\gamma * \bar{\varepsilon}$ identifies an element of $\im(\pi_1(U\rightarrow X,\gamma(0)))$, hence, by (RC2), there exists a loop $\delta \colon [0,1] \rightarrow U\cap A$ which is homotopic in $X$ to $\gamma * \bar{\varepsilon}$ relative to the endpoints. Therefore, we can set $\lambda \coloneqq \delta * \varepsilon$.
	The converse implication is straightforward.
\end{proof}

\subsection{Relative bounded cohomology of pairs}
Let $(X,A)$ be a pair of topological spaces. Let $C^\bullet(X)$ and $C^\bullet(A)$ denote the real singular cochain complexes of $X$ and $A$ respectively. We denote by $C^\bullet(X,A)$ the kernel of the restriction map $C^\bullet(X)\rightarrow C^\bullet(A)$. The \emph{real bounded cohomology} $H^\bullet_b(X,A)$ of $(X,A)$ is the cohomology of the subcomplex $C^\bullet_b(X,A)\subseteq C^\bullet(X,A)$ given by bounded cochains, where a cochain $f \in C^n(X)$ is called \emph{bounded} if 
\[
\lVert f \rVert_\infty =
\sup \bigl\{ | f(\sigma) |, \; \sigma \mbox{ is a singular $n$-simplex}
\bigr\} < \infty .
\]
The inclusion of complexes $C^\bullet_b(X,A)\subseteq C^\bullet(X,A)$ induces the so-called \emph{comparison map} $\comp^n\colon H^n_b(X,A)\rightarrow H^n(X,A)$.

\subsection{Multicomplexes}
\label{sec: multicomplexes}
Multicomplexes are simplicial structures introduced by Gromov in \cite{gromov1982volume}. A \emph{multicomplex} can be defined as a $\Delta$-complex that is both regular and unordered (see \cite{hatcherAT}) i.e. a $\Delta$-complex whose simplices have unordered and distinct vertices, where multiple simplices may share the same set of vertices. 
Every simplicial complex is of course also a multicomplex.
We refer the reader to \cite{FM23} for a more precise discussion of the notion of multicomplex.

A pair of multicomplexes is a pair $(K,L)$, where $L$ is a submulticomplex of $K$.
A simplicial map between pairs of multicomplexes $f\colon (K,L)\rightarrow (K',L')$ is given by a simplicial map $f\colon K\rightarrow K'$ such that $f(L)\subseteq L'$.
We denote by $\Aut(K,L)$ the group of simplicial automorphisms of the pair $(K,L)$. 
Given a group $G$, a simplicial group action $G\acts (K,L)$ is given by a group homomorphism $G\rightarrow \Aut(K,L)$.

Let $K$ be a multicomplex. An \emph{algebraic $n$-simplex} of $K$ is a pair 
\[
\sigma = (\Delta, (v_0,\dots, v_n)),
\]
where $\Delta$ is a $k$-simplex of $K$, and the set $\{v_0,\dots,v_n\}$ is the set of vertices of $\Delta$. We do not require the elements of the ordered $(n+1)$-tuple $(v_0,\dots,v_n)$ to be pairwise distinct. However, since $\Delta$ has exactly $k+1$ vertices, one has $k\leq n$. 
We denote by $C_n(K)$ the vector space generated by algebraic $n$-simplices.
For every $i \in \{0,\dots, n\}$, the $i$-th face $\partial_n^i \sigma$ of $\sigma$ is given by 
\[
\partial_n^i \sigma = (\Delta',(v_0,\dots, \hat{v}_i,\dots, v_n)) \in C_{n-1}(K),
\]
where $\Delta'=\Delta$, if $\{v_0,\dots,v_n\}= \{ v_0,\dots, \hat{v_i}, \dots, v_n\}$, and $\Delta'$ is the unique face of $\Delta$ with vertices $\{ v_0,\dots, \hat{v_i}, \dots, v_n\}$, otherwise.
The boundary operator $\partial_n\colon C_n(K)\rightarrow C_{n-1}(K)$, defined as the alternating sum of the faces $\partial_n^i$, endows $C_\bullet(K)$ with the structure of a chain complex.
Let $C^\bullet(K)$ (resp. $C^\bullet_b(K)$) be the complex of (bounded) real simplicial cochains on $K$, and let $H^\bullet(K)$ (resp. $H^\bullet_b(K)$) be the corresponding cohomology module. 

If $L$ is a submulticomplex of $K$, the inclusion $C_\bullet(L)\subseteq C_\bullet(K)$ induces a short exact sequence of chain complexes
\[
0\rightarrow C^\bullet_b(K,L)\rightarrow C^\bullet_b(K)\rightarrow C^\bullet_b(L)\rightarrow 0.
\]
We denote by $H^\bullet_b(K,L)$ the cohomology of the complex $C^\bullet_b(K,L)$, which in turn fits into the long exact sequence
\[
\dots \rightarrow H^{n-1}_b(L)\rightarrow H^n_b(K,L)\rightarrow H^n_b(K)\rightarrow H^n_b(L)\rightarrow \dots
\]

A cochain $f \in C^n(K)$ is called \emph{alternating} if, for every algebraic simplex $(\Delta, (v_0,\dots, v_n))$ and every permutation $\tau$ of the set $\{0,\dots,n\}$, we have that $f(\Delta, (v_0,\dots, v_n)) = \text{sign}(\tau) \cdot f(\Delta, (v_{\tau(0)},\dots, v_{\tau(n)}))$.
We denote by $C^\bullet_b(K)_\alt$ the subcomplex of alternating cochains. Since the inclusion $C^\bullet_b(K)_\alt \subseteq C^\bullet_b(K)$ induces an isomorphisms in cohomology \cite[Theorem 1.9]{FM23}, it follows from the Five Lemma that the inclusion 
\[
C^\bullet_b(K,L)_\alt=\ker(C^\bullet_b(K)_\alt\rightarrow C^\bullet_b(L)_\alt)\rightarrow C^\bullet_b(K,L)
\]
induces an isomorphisms in cohomology. In particular, every cohomology class in $H^\bullet_b(K,L)$ can be represented by an alternating cocycle.

\subsection{Invariant chains}
If a group $G$ acts simplicially on a multicomplex $K$, then it induces a linear action $G\acts C_\bullet(K)$ on the space of algebraic simplices.
This in turns induces a linear action $G\acts C^\bullet_b(K)$ on the space of bounded cochains. 
We denote by $C^\bullet_b(K)^G$ the space of $G$-invariant bounded cochains.
Let $L$ be a submulticomplex of $K$ and let $H$ be a subgroup of $G$ whose action on $K$ preserves $L$.
Then there is an obvious restriction map \[r_{G,H}^\bullet\colon C^\bullet_b(K)^G\rightarrow C^\bullet_b(L)^H,\]
whose kernel is $C^\bullet_b(K,L)^{G}\coloneqq C^\bullet_b(K)^G\cap C^\bullet_b(K,L)$.
The following regularity condition for the action $G\acts K$ on $L$ was introduced in \cite[Definition 9.1]{Cap24}.
The action \emph{$G\acts K$ has orbits in $L$ induced by $H$} if, for every algebraic simplex $\sigma$ of $L$ and every $g \in G$ such that $g\cdot \sigma$ is an algebraic simplex of $L$, there exists $h \in H$ such that $h\cdot \sigma = g \cdot \sigma$.

If the action $G\acts K$ has orbits in $L$ induced by $H$, it follows that the restriction map $r_{G,H}^\bullet$ is surjective.
In fact, given an $H$-invariant cochain $z \in C^n_b(L)^H$, we can define in the following way a $G$-invariant cochain $z' \in C^n_b(K)^G$ such that $r_{G,H}^n(z')=z$.
For every algebraic $n$-simplex $\sigma$ of $K$, we set $z'(\sigma)= z(\hat{\sigma})$, if $\sigma = g\cdot \hat{\sigma}$ for some $g \in G$ and $\hat{\sigma}\in C_n(L)$, and $z'(\sigma)=0$, otherwise.
Since $z$ is $H$-invariant and the action $G\acts K$ has orbits in $L$ induced by $H$, we have that $z'$ is indeed well-defined.
Moreover it is clear that $z'$ is $G$-invariant and satisfies $r_{G,H}^n(z')=z$.
To summarize, we have proved the following lemma, which allows us, under suitable circumstances, to take $G$-invariant cocycles in every relative coclass of $H^\bullet_b(K,L)$ (see also \cite[Lemma 9.2]{Cap24}).
\begin{lemma}
	\label{lemma: invariant cochains}
	Let $(K,L)$ be a pair of multicomplexes. Let $G\acts K$ be a simplicial action and let $H$ be a subgroup of $G$ which induces an action $H\acts (K,L)$.
	Assume that the orbits of $G$ in $L$ are induced by $H$. Then the rows of the following commutative diagram are both exact
	\[
	\begin{tikzcd}
		0 \arrow[r] & {C^\bullet_b(K,L)^{G}} \arrow[r] \arrow[d] & C^\bullet_b(K)^G \arrow[r, "{r_{G,H}^\bullet}"] \arrow[d]  & C^\bullet_b(L)^H \arrow[r] \arrow[d] & 0 \\
		0 \arrow[r] & {C^\bullet_b(K,L)} \arrow[r]                 & C^\bullet_b(K) \arrow[r] & C^\bullet_b(L) \arrow[r]             & 0,
	\end{tikzcd}
	\]
	where the vertical arrows are inclusions of cochains. In particular, if the inclusions $C^\bullet_b(K)^G\hookrightarrow C^\bullet_b(K)$ and $C^\bullet_b(L)^H\hookrightarrow C^\bullet_b(L)$ induce isomorphisms in cohomology, then $C^\bullet_b(K,L)^{G}\hookrightarrow C^\bullet_b(K,L)$ induces an isomorphism in cohomology.
\end{lemma}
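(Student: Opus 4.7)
The discussion preceding the statement of the lemma already establishes the main technical content, namely the construction of a $G$-invariant extension $z'$ of any $H$-invariant cochain $z \in C^n_b(L)^H$, using the orbit-induction hypothesis to guarantee well-definedness. My plan is to assemble that construction into a clean proof of both assertions: exactness/commutativity of the ladder and the Five Lemma consequence.

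First, I would verify exactness of the two rows. The bottom row is standard: $C^\bullet_b(K,L)$ is by definition the kernel of the restriction $C^\bullet_b(K)\to C^\bullet_b(L)$, which is surjective because one can extend any bounded cochain on $L$ by zero on algebraic simplices of $K$ not contained in $L$. For the top row, the kernel of $r_{G,H}^\bullet$ is $C^\bullet_b(K,L)\cap C^\bullet_b(K)^G = C^\bullet_b(K,L)^G$ by definition; and surjectivity is exactly the construction of $z'$ from $z$ recalled before the statement, where the orbit-induction hypothesis ensures that setting $z'(\sigma) := z(\hat\sigma)$ for $\sigma = g\cdot\hat\sigma$ does not depend on the choice of $g$. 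Commutativity of the diagram is immediate since all vertical arrows are plain inclusions of sub-cochain complexes.

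Next, I would deduce the cohomological statement by a Five Lemma argument. Each row is a short exact sequence of cochain complexes, hence induces a long exact sequence in cohomology, and the three vertical inclusions form a morphism of short exact sequences of complexes, hence a morphism of the associated long exact sequences. For each $n$, this yields a commutative ladder
\[
\begin{tikzcd}[column sep=small]
H^{n-1}(C^\bullet_b(L)^H) \arrow[r]\arrow[d] & H^{n}(C^\bullet_b(K,L)^G) \arrow[r]\arrow[d] & H^{n}(C^\bullet_b(K)^G) \arrow[r]\arrow[d] & H^{n}(C^\bullet_b(L)^H) \arrow[r]\arrow[d] & H^{n+1}(C^\bullet_b(K,L)^G) \arrow[d] \\
H^{n-1}_b(L) \arrow[r] & H^{n}_b(K,L) \arrow[r] & H^{n}_b(K) \arrow[r] & H^{n}_b(L) \arrow[r] & H^{n+1}_b(K,L).
\end{tikzcd}
\]
By hypothesis the first, third, fourth (and fifth, for the next instance) vertical arrows are isomorphisms, so the Five Lemma forces the middle arrow $H^n(C^\bullet_b(K,L)^G) \to H^n_b(K,L)$ to be an isomorphism too.

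The only point that requires a moment of care is the well-definedness of $z'$, but this is precisely engineered by the orbit-induction condition: if $g_1 \cdot \hat\sigma = g_2 \cdot \hat\sigma'$ with $\hat\sigma, \hat\sigma'$ algebraic simplices of $L$, then $\hat\sigma = (g_1^{-1}g_2)\cdot \hat\sigma'$ is a $G$-translate of $\hat\sigma'$ lying in $L$, hence equals $h\cdot\hat\sigma'$ for some $h\in H$, and $H$-invariance of $z$ gives $z(\hat\sigma) = z(\hat\sigma')$. So I do not foresee any genuine obstacle; the lemma is essentially a formal consequence of the orbit-induction hypothesis combined with the Five Lemma.
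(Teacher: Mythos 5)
Your proposal is correct and follows essentially the same route as the paper: the paper establishes the surjectivity of $r_{G,H}^\bullet$ (the only non-trivial point) in the discussion immediately preceding the lemma, using precisely the extension $z'$ you describe and the orbit-induction hypothesis for well-definedness, and then treats the Five Lemma step as an immediate consequence. Your additional spelling-out of the well-definedness check via $g_1^{-1}g_2$ is a faithful unpacking of the paper's remark that ``$z'$ is indeed well-defined.''
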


\subsection{The singular multicomplex}
\label{subsec: singular multicomplex}
Let $X$ be a topological space. The \emph{singular multicomplex} $\K(X)$ of $X$ is the multicomplex whose simplices are the singular simplices in $X$ with distinct vertices, up to affine parametrization \cite{gromov1982volume}. 
Therefore, the geometric realization $|\K(X)|$ of $\K(X)$ is a CW-complex with 0-skeleton corresponding to the set $X$ itself. Moreover, there is an obvious projection $S_X\colon |\K(X)|\rightarrow X$, which is a homotopy equivalence when $X$ is a CW-complex \cite[Corollary 2.2]{FM23}. 
The size of $\K(X)$ can be reduced without changing its homotopy type. 
We define a submulticomplex $\Elle(X)$ of $\K(X)$ as follows: the 0-skeleton of $\Elle(X)$ is the same as the one of $\K(X)$ (hence it corresponds to the set $X$); having already defined the $n$-skeleton $\Elle(X)^n$ of $\Elle(X)$, we define the $(n+1)$-skeleton by adding to $\Elle(X)^n$ one $(n+1)$-simplex for each homotopy class of $(n+1)$-simplices of $\K(X)$ whose facets are all contained in $\Elle(X)^n$. 
It turns out that the inclusion $|\Elle(X)|\hookrightarrow|\K(X)|$ is a homotopy equivalence \cite[Theorem 3.23]{FM23}.
Even if the construction of $\Elle(X)$ depends on several choices, the submulticomplex $\Elle(X)$ is unique up to simplicial isomorphism \cite[Theorem 3.23]{FM23}.
To the singular multicomplex we can associate an \emph{aspherical multicomplex} $\A(X)$, which is defined as the quotient of $\Elle(X)$, where two simplices of $\Elle(X)$ are identified if and only if they share the same 1-skeleton.
One can define a simplicial projection $\pi\colon \Elle(X)\rightarrow \A(X)$, which restricts to the identity on $\Elle(X)^1=\A(X)^1$. 
This quotient map induces an isomorphism on fundamental groups \cite[Proposition 3.33]{FM23} and kills higher homotopy \cite[Theorem 3.31]{FM23}. 
Therefore the topological realization $|\A(X)|$ of $\A(X)$ is a model for the classifying space of the fundamental group of $X$.

\subsection{The group $\Pi(X,X)$ and its action on $\A(X)$}

Let $X$ be a topological space. 
The group $\Pi(X,X)$, first introduced by Gromov in \cite{gromov1982volume}, is defined as follows.
\begin{defn}
	\label{defn: group Pi(X,X)}
	Let $X_0$ be a subset of $X$. The set $\Omega(X,X_0)$ consists of families of paths $\{\gamma_x\}_{x \in X_0}$ such that:
	\begin{itemize}
		\item[(1)] for every $x \in X_0$, $\gamma_x\colon [0,1]\rightarrow X$ is a continuous path such that $\gamma_x(0)=x$ and $\gamma_x(1) \in X_0$;
		\item[(2)] $\gamma_x$ is constant for all but finitely many $x \in X_0$;
		\item[(3)] the map $X_0\rightarrow X_0$, $x\mapsto\gamma_x(1)$, is a bijection (with finite support).
	\end{itemize}
	The following concatenation of paths endows $\Omega(X,X_0)$ with the structure of a semigroup: given two elements $\{\gamma_x\}_{x \in X_0}$ and $\{\gamma'_x\}_{x \in X_0}$ of $\Omega(X,X_0)$, their concatenation is $\{\gamma_x * \gamma'_{\gamma_x(1)} \}_{x \in X_0}$, where $*$ denotes the usual concatenation of paths.
	In order to obtain a group, we consider the set $\Pi(X,X_0)$ of homotopy classes of elements of $\Omega(X,X_0)$, where two elements $\{\gamma_x\}_{x \in X_0}$ and $\{\gamma'_x\}_{x \in X_0}$ of $\Omega(X, X_0)$ are \emph{homotopic} if $\gamma_x$ is homotopic to $\gamma_x'$ in $X$ relative to the endpoints, for every $x \in X_0$.
\end{defn}

Elements of $\Pi(X,X)$ are usually denoted by listing the homotopically non-trivial paths in one of its representatives.
If $V \subseteq U \subseteq X$, then the inclusion $(U,V)\hookrightarrow (X,X)$ induces a group homomorphism $\Pi(U,V)\rightarrow \Pi(X,X)$, whose image is denoted by $\Pi_X(U,V)$.
If $X_0=\{x_0\}$, then $\Pi(X,X_0)=\pi_1(X,x_0)$ is just the fundamental group of $X$ at $x_0$. In general, we have an injective group homomorphism
\[
\bigoplus_{x \in X_0} \pi_1(X, x)\hookrightarrow \Pi(X,X_0),
\]
which is the kernel of the natural homomorphism from $\Pi(X,X_0)$ to the group of permutations of $X_0$ with finite support \cite[Proposition 6.5]{FM23}. Since the latter is amenable, if every connected component of $X$ has amenable fundamental group, then $\Pi(X,X)$ is amenable. In general, we have the following.
\begin{lemma}[{\cite[Lemma 6.6]{FM23}}]
	\label{lemma: Pi_X(U,V) is amenable}
	Let $U$ be an amenable subset of $X$ and let $V \subseteq U$ be any subset. Then the subgroup $\Pi_X(U,V)\leq \Pi(X,X)$ is amenable.
\end{lemma}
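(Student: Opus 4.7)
The plan is to analyze $\Pi_X(U,V)$ via its natural action on the subset $V$ and obtain the conclusion as a two-step extension of amenable groups.

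First, I would realize $\Pi_X(U,V) \subseteq \Pi(X,X)$ concretely: its elements are represented by families of paths $\{\gamma_x\}_{x \in V}$, each $\gamma_x\colon [0,1]\to U$ with $\gamma_x(0)=x$ and $\gamma_x(1)\in V$, extended to all of $X$ by the constant path outside $V$, and taken up to homotopy in $X$. Following the structural sequence recalled just before the lemma, the assignment $[\{\gamma_x\}]\mapsto (x\mapsto \gamma_x(1))$ defines a group homomorphism
\[
\phi\colon \Pi_X(U,V)\longrightarrow \mathrm{Sym}^{\mathrm{fin}}(V),
\]
where $\mathrm{Sym}^{\mathrm{fin}}(V)$ denotes the group of permutations of $V$ with finite support. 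Since $\mathrm{Sym}^{\mathrm{fin}}(V)$ is a locally finite group, it is amenable, and so is its subgroup $\im(\phi)$.

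Next, I would identify $\ker(\phi)$. An element of the kernel is represented by a family $\{\gamma_x\}_{x\in V}$ of \emph{loops} at each $x\in V$ taking values in $U$, considered up to homotopy in $X$ relative to the endpoints; by definition of the extension above, all but finitely many $\gamma_x$ are constant. Therefore we obtain an injective homomorphism
\[
\ker(\phi)\hookrightarrow \bigoplus_{x\in V}\im\bigl(\pi_1(U,x)\to\pi_1(X,x)\bigr),
\]
whose target is a direct sum of amenable groups by the hypothesis that $U$ is amenable (i.e.\ $\im(\pi_1(U,x)\to\pi_1(X,x))$ is amenable for every $x\in U\supseteq V$). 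A direct sum of amenable groups is amenable, so $\ker(\phi)$ is amenable as a subgroup of an amenable group.

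Finally, I would conclude using the standard fact that an extension of an amenable group by an amenable group is amenable: since both $\ker(\phi)$ and $\im(\phi)$ are amenable, so is $\Pi_X(U,V)$. The only subtle point is the first step, namely being careful that the homotopy identifications in $X$ (rather than in $U$) are exactly what makes the kernel land in $\bigoplus_{x\in V}\im(\pi_1(U,x)\to\pi_1(X,x))$ and not in $\bigoplus_x \pi_1(U,x)$; once this is correctly pinned down, everything else is formal.
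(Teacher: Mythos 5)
Your proof is correct and follows essentially the same approach that the paper sketches just before the lemma (and that underlies \cite[Lemma 6.6]{FM23}): restrict the short exact sequence $1 \to \bigoplus_{x} \pi_1(X,x) \to \Pi(X,X_0) \to \mathrm{Sym}^{\mathrm{fin}}(X_0) \to 1$ to $\Pi_X(U,V)$, observe that the kernel lands in $\bigoplus_{x\in V}\im(\pi_1(U,x)\to\pi_1(X,x))$ by amenability of $U$, and conclude via closure of amenability under subgroups, direct sums, and extensions. Your explicit remark that the identifications are up to homotopy in $X$ (not in $U$), which is precisely why the kernel sits inside the images $\im(\pi_1(U,x)\to\pi_1(X,x))$ rather than in $\pi_1(U,x)$, is the right subtle point to flag.
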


The group $\Pi(X,X)$ acts on $\A(X)$ as follows. 
Let $g\in \Pi(X,X)$ and let $\{\gamma_x\}_{x \in X}$ be a representative of $g$.
We define the action of $g$ on the 0-skeleton of $\A(X)$ as the permutation induced by $g$ on $\A(X)^0=X$.
Let $e$ be an edge of $\A(X)$ with endpoints $v_0,v_1 \in \A(X)^0=X$. Recall that edges of $\A(X)$ (hence of $\Elle(X)$) correspond to homotopy classes (relative to endpoints) of paths in $X$. Let $\gamma_e\colon [0,1]\rightarrow X$ be a representative of $e$. We define $g\cdot e$ as the homotopy class (relative to the endpoints) of the path $\bar{\gamma}_{v_0}* \gamma_e * \gamma_{v_1}$.
Since $\A(X)$ is aspherical, we can uniquely extend $g$ to a simplicial automorphisms of the whole $\A(X)$, which is simplicially homotopic to the identity \cite[Theorem 5.3]{FM23}.

\section{From pairs of spaces to pairs of multicomplexes}
\label{sec: bounded cohomology of pairs}
Let $(X,A)$ be a CW-pair.
Being injective, the inclusion map $j\colon A \hookrightarrow X$ induces a simplicial embedding $j_\K\colon \K(A)\hookrightarrow \K(X)$ at the level of singular multicomplexes.
We denote by $S_X\colon |\K(X)|\rightarrow X$ and $S_A\colon |\K(A)|\rightarrow A$ the corresponding projections. Of course we have that $S_X \circ |j_\K|=j\circ S_A $.
We consider the multicomplexes $\Elle(A)$ and $\Elle(X)$, respectively, and we denote by $i_A\colon \Elle(A)\hookrightarrow \K(A)$ and $i_X\colon \Elle(X)\hookrightarrow \K(X)$ the corresponding simplicial inclusions.
There is a simplicial map $j_\Elle\colon \Elle(A)\rightarrow \Elle(X)$ which sends each simplex $\Delta$ of $\Elle(A)$ to the unique simplex of $\K(X)$ which is homotopic to $\Delta$ relative to the 0-skeleton \cite[Proposition 4.1]{Cap24}.
Moreover, we can construct $\Elle(X)$ and $\Elle(A)$ such that $j_\K \circ i_A = i_X \circ j_\Elle$.
\begin{rem}
	\label{rem: the mapping theorem is not canonical}
	Although the multicomplexes $\Elle(A)$ and $\Elle(X)$ are unique up to simplicial isomorphism \cite[Theorem 3.23]{FM23}, some choices are needed in order to obtain that $j_\K \circ i_A = i_X \circ j_\Elle$. Namely, once $\Elle(A)$ has been constructed, we need to construct $\Elle(X)$ in the following way: if the facets of a simplex are all contained in $A$ and if there is a simplex of $\Elle(A)$ in the same homotopy class, we choose this as a representative. In this paper, whenever we construct a pair of multicomplexes from a CW-pair, we always adhere to this setting.
\end{rem}
The map $j_\Elle$ is not injective in general: to this end one needs to have more control over the homotopy of the pair $(X,A)$ \cite[Proposition 4.2]{Cap24}.
Let $\pi \colon \Elle(X)\rightarrow \A(X)$ denote the simplicial projection which identifies simplices sharing the same 1-skeleton. 
Of course, $j_\Elle$ factors to a well-defined simplicial map $j_\A\colon \A(A)\rightarrow \A(X)$.
We denote by $\A_X(A)$ the image of $\A(A)$ in $\A(X)$ via $j_\A$, so that the pair of multicomplexes $(\A(X),\A_X(A))$ is well-defined. 
We denote by $q\colon \A(A)\rightarrow \A_X(A)$ the surjective map induced by $j_\A$.
If $A$ is $\pi_1$-injective in $X$, then $j_\A$ is a simplicial embedding and $\A_X(A)$ is simplicially isomorphic to $\A(A)$ \cite[Section 1.3]{kuessner2015multicomplexes} \cite[Proposition 4.3]{Cap24}.
In short, we have the following commutative diagram of simplicial maps
\begin{equation}
	\label{eq: diagram simplicial maps for pairs}
	\begin{tikzcd}
		\K(A) \arrow[d, "j_\K", hook] & \Elle(A) \arrow[l, "i_A", hook'] \arrow[d, "j_\Elle"] \arrow[r, "\pi", two heads] & \A(A) \arrow[d, "j_\A"] \arrow[r, "q", two heads] & \A_X(A) \arrow[d, hook] \\
		\K(X)                         & \Elle(X) \arrow[l, "i_X", hook'] \arrow[r, "\pi", two heads]                      & \A(X) \arrow[r, equal]                 & \A(X).                  
	\end{tikzcd}
\end{equation}
\begin{prop}[{\cite[Theorem 7]{Cap24}}]
	\label{prop: relative mapping theorem}
	Let $(X,A)$ be a CW-pair such that the kernel of the morphism $\pi_1(A\hookrightarrow X, x)$ is amenable for every $x \in A$.
	Then, for every $n \in \N$, there is an isomorphism of vector spaces
	\[
	\Psi^n\colon H^n_b(\A(X),\A_X(A))\rightarrow H^n_b(X,A).
	\]
\end{prop}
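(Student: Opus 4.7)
The plan is to build $\Psi^n$ as a composition of isomorphisms on relative bounded cohomology induced along the zig-zag of simplicial maps in the commutative diagram~\eqref{eq: diagram simplicial maps for pairs}. At every stage the recipe will be the same: first check that the relevant arrow induces an isomorphism on the absolute bounded cohomology of $X$ and of $A$ (or on that of the associated multicomplexes), and then conclude by applying the five-lemma to the long exact sequences of the pairs, exploiting the commutativity of~\eqref{eq: diagram simplicial maps for pairs}.

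First, I would pass from $(X,A)$ to the pair $(\K(X), j_\K(\K(A)))$: since $(X,A)$ is a CW-pair, the projections $S_X$ and $S_A$ are homotopy equivalences by \cite[Corollary 2.2]{FM23}, and they intertwine $j$ and $|j_\K|$ via $S_X \circ |j_\K| = j \circ S_A$. A first five-lemma gives an isomorphism on relative bounded cohomology.

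Next, I would move from $\K$ down to $\Elle$: the inclusions $|\Elle(X)| \hookrightarrow |\K(X)|$ and $|\Elle(A)| \hookrightarrow |\K(A)|$ are homotopy equivalences by \cite[Theorem 3.23]{FM23}, and by the coherence convention discussed in Remark~\ref{rem: the mapping theorem is not canonical} they intertwine $j_\K$ and $j_\Elle$. A second five-lemma then produces the isomorphism $H^n_b(\K(X), j_\K(\K(A))) \cong H^n_b(\Elle(X), j_\Elle(\Elle(A)))$.

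Finally, I would descend from $\Elle$ to $\A$. On the $X$-side, $\pi \colon \Elle(X) \to \A(X)$ induces an isomorphism on $\pi_1$ and annihilates higher homotopy, so $|\A(X)|$ is a $K(\pi_1(X),1)$ and $\pi$ induces an iso on bounded cohomology. On the $A$-side, the restriction of $\pi$ factors as $\Elle(A) \xrightarrow{\pi} \A(A) \xrightarrow{q} \A_X(A)$: the first factor is again a $\pi_1$-isomorphism killing higher homotopy, whereas $q$ is a surjection between aspherical multicomplexes realizing precisely the surjection $\pi_1(A) \twoheadrightarrow \im(\pi_1(A \hookrightarrow X))$, whose kernel is amenable by hypothesis; Gromov's mapping theorem then yields that $q$ induces an iso on bounded cohomology. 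A third and final five-lemma argument delivers $\Psi^n$. The hard part, and the step that genuinely uses the amenability hypothesis, will be this last one: because $j_\Elle$ need not be injective and $q$ is a proper quotient, I expect the bulk of the work to be in verifying that the ladder to which the five-lemma is applied is indeed commutative, i.e.\ that the various identifications between singular, simplicial and multicomplex bounded cohomology are compatible both with the maps in~\eqref{eq: diagram simplicial maps for pairs} and with the connecting homomorphisms of the long exact sequences of the pairs.
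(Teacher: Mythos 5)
Your zig-zag through the diagram \eqref{eq: diagram simplicial maps for pairs} is the right picture, and you correctly pinpoint that the amenability hypothesis enters in showing that $q\colon\A(A)\to\A_X(A)$ induces an isomorphism on bounded cohomology. However, the plan of repeatedly applying the five lemma to long exact sequences of \emph{pairs} has a genuine gap precisely at the step you flag as the hard one, and the problem is not one of commutativity. At the $\K\to\Elle$ stage you would compare the long exact sequence of $(\K(X), j_\K(\K(A)))$ with that of $(\Elle(X), j_\Elle(\Elle(A)))$, and the five lemma requires the map on the subcomplex slots, namely $H^\bullet_b(j_\K(\K(A)))\to H^\bullet_b(j_\Elle(\Elle(A)))$, to be an isomorphism. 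But $j_\K(\K(A))\cong \K(A)$ whereas, since $j_\Elle$ need not be injective, $j_\Elle(\Elle(A))$ is a possibly proper quotient of $\Elle(A)$; the isomorphism $H^\bullet_b(\K(A))\cong H^\bullet_b(\Elle(A))$ (from $i_A$) does not by itself tell you anything about $H^\bullet_b(j_\Elle(\Elle(A)))$. So there is no evident isomorphism in the third slot, and the five lemma cannot be invoked. The same difficulty recurs at the $\Elle\to\A$ stage when comparing $j_\Elle(\Elle(A))$ with $\A_X(A)$.

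This is exactly why the paper abandons pairs in the middle of the zig-zag and works with \emph{mapping cones} instead: Lemma~\ref{lemma: mapping cones from pairs} replaces $C^\bullet_b(K,L)$ with $C^\bullet_b(i\colon L\hookrightarrow K)$ at the two ends (where the subcomplex inclusions are genuine), and Lemma~\ref{lemma: naturality of mapping cones} lets one pass from $C^\bullet_b(j_\K\colon \K(A)\to\K(X))$ to $C^\bullet_b(j_\Elle\colon\Elle(A)\to\Elle(X))$ to $C^\bullet_b(j_\A\colon\A(A)\to\A(X))$ to $C^\bullet_b(\A_X(A)\hookrightarrow\A(X))$. The decisive advantage of the mapping cone of $j_\Elle$ is that it is built from $\Elle(A)$ and $\Elle(X)$ themselves rather than from the image $j_\Elle(\Elle(A))$, so the naturality lemma only requires $i_A$ and $i_X$ (respectively $\pi$, $q$) to individually induce isomorphisms on absolute bounded cohomology — conditions you have already verified — with no need to control what happens to images of subcomplexes. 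To repair your argument you would need to replace the pair $(\Elle(X), j_\Elle(\Elle(A)))$ (and the pair involving $\A_X(A)$) by the corresponding mapping cones and use Lemma~\ref{lemma: naturality of mapping cones} in place of the five lemma at the intermediate stages.
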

The previous result is achieved in \cite{Cap24} by using the machinery of mapping cones developed in \cite{Park03, loh2008isomorphisms}.
The map $\Psi^n$ is induced by the maps in (\ref{eq: diagram simplicial maps for pairs}), hence it is \emph{a priori not canonical}, since our construction depends on the choices described in Remark \ref{rem: the mapping theorem is not canonical}.
We refer to the next pages for the details of the construction, which can be also found in \cite[Section 8]{Cap24}.

\subsection{Mapping cones}
Let $f\colon L\rightarrow K$ be a simplicial map between multicomplexes. The \emph{mapping cone complex} of $f$ is $(C^\bullet_b(f\colon L\rightarrow K), \bar{\delta}^\bullet)$, where
\[
C^n_b(f\colon L\rightarrow K)=C^n_b(K)\oplus C^{n-1}_b(L), \quad \bar{\delta}^n(u,v)=(\delta^n(u), - f^n(u)-\delta^{n-1}(v)).
\]
The proof of the following is analogous to the one of \cite[Theorem 3.19]{Park03}.
\begin{lemma}[{\cite[Lemma 7.4]{Cap24}}]
	\label{lemma: mapping cones from pairs}
	Let $(K,L)$ be a pair of multicomplexes and let $i\colon L\hookrightarrow K$ denote the inclusion map. The chain map $\beta^\bullet \colon C^\bullet_b(K,L)\rightarrow C^\bullet_b(i\colon L\hookrightarrow K)$, defined by $\beta^n(u)=(u,0)$, induces an isomorphism in cohomology in every degree.
\end{lemma}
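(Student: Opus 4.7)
The plan is to realize $\beta^\bullet$ as the injection in a short exact sequence of cochain complexes whose cokernel is acyclic, and then invoke the associated long exact sequence in cohomology. First, I would check that $\beta^\bullet$ is indeed a chain map: if $u\in C^n_b(K,L)$ then $i^n(u)=0$, so
$$\bar\delta^n\beta^n(u)=\bar\delta^n(u,0)=(\delta^n u,\,-i^n(u))=(\delta^n u,0)=\beta^{n+1}(\delta^n u),$$
and $\beta^\bullet$ is clearly injective since $(u,0)=0$ forces $u=0$.

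Next, I would identify the cokernel. Any bounded cochain on $L$ extends to a bounded cochain on $K$ by setting it equal to zero on the algebraic simplices of $K$ that do not lie in $L$, so the restriction map $i^\bullet\colon C^\bullet_b(K)\twoheadrightarrow C^\bullet_b(L)$ is surjective and induces an isomorphism $C^n_b(K)/C^n_b(K,L)\cong C^n_b(L)$. Passing to the quotient in $C^n_b(i)=C^n_b(K)\oplus C^{n-1}_b(L)$ therefore yields
$$Q^n\;=\;C^n_b(L)\oplus C^{n-1}_b(L),\qquad d^n(w,v)=\bigl(\delta^n w,\,-w-\delta^{n-1}v\bigr),$$
which is exactly the mapping cone complex of $\mathrm{id}\colon C^\bullet_b(L)\to C^\bullet_b(L)$. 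A direct calculation shows that $h^n\colon Q^n\to Q^{n-1}$, $h^n(w,v)=(-v,0)$, is a cochain homotopy satisfying $d^{n-1}h^n+h^{n+1}d^n=\mathrm{id}_{Q^n}$, so $Q^\bullet$ is acyclic.

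Putting everything together, the resulting short exact sequence of cochain complexes
$$0\longrightarrow C^\bullet_b(K,L)\xrightarrow{\;\beta^\bullet\;} C^\bullet_b(i\colon L\hookrightarrow K)\longrightarrow Q^\bullet\longrightarrow 0$$
induces a long exact sequence in cohomology in which $H^n(Q^\bullet)=0$ for every $n$, forcing $\beta^\bullet$ to be a quasi-isomorphism. No deep step is required; the only slightly delicate points are tracking signs in the mapping cone differential $\bar\delta^n(u,v)=(\delta u,-i(u)-\delta v)$ when computing the induced differential on the cokernel, and verifying the explicit contracting homotopy above—both routine.
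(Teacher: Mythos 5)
Your proof is correct and is essentially the standard argument that the paper has in mind when it defers to \cite[Theorem 3.19]{Park03}: realize $\beta^\bullet$ as the kernel inclusion in a short exact sequence of complexes whose cokernel is the cone on the identity of $C^\bullet_b(L)$, exhibit an explicit contracting homotopy for that cone, and conclude by the long exact sequence. The one point worth making explicit — and you do handle it — is that the restriction $C^\bullet_b(K)\to C^\bullet_b(L)$ is surjective at the level of \emph{bounded} cochains, which is immediate here because extension by zero on algebraic simplices of $K\setminus L$ does not increase the sup norm; this is where the combinatorial (multicomplex) setting makes life easy.
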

The following lemma is a consequence of the naturality of the short exact sequence of chain complexes $0\rightarrow C^{\bullet-1}_b(L)\rightarrow C^\bullet_b(f\colon L\rightarrow K)\rightarrow C^\bullet_b(K)\rightarrow 0$.
\begin{lemma}
	\label{lemma: naturality of mapping cones}
	For every commutative diagram of simplicial maps
	\[
	\begin{tikzcd}
		L \arrow[d, "f"] \arrow[r, "\xi_L"] & L' \arrow[d, "f'"] \\
		K \arrow[r, "\xi_K"]                & K'                
	\end{tikzcd}
	\]
	there is a chain map $\xi^\bullet\colon C^\bullet_b(f'\colon L'\rightarrow K')\rightarrow C^\bullet_b(f\colon L\rightarrow K)$, defined by $\xi^n(u,v)=(\xi_K^n(u),\xi_L^{n-1}(v))$. Moreover, if both $\xi_K$ and $\xi_L$ induce isomorphisms in bounded cohomology, then also $\xi^\bullet$ does.
\end{lemma}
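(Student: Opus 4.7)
The plan is to address the two assertions in turn: first verify that the prescribed formula does define a chain map, then deduce the cohomological statement from the naturality of the standard short exact sequence and the Five Lemma.

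For the first assertion, I would simply check the equality $\xi^{n+1}\circ \bar{\delta}^n = \bar{\delta}^n \circ \xi^n$ on an arbitrary pair $(u,v)\in C^n_b(K')\oplus C^{n-1}_b(L')$. Unfolding the definition of $\bar{\delta}$ gives $\xi^{n+1}(\bar{\delta}^n(u,v))=(\xi_K^{n+1}\delta^n u,\, \xi_L^n(-f'^{\,n}u-\delta^{n-1}v))$, while the other composition yields $\bar{\delta}^n(\xi_K^n u,\xi_L^{n-1}v)=(\delta^n\xi_K^n u,\, -f^n\xi_K^n u-\delta^{n-1}\xi_L^{n-1}v)$. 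The agreement of these two expressions reduces to the fact that $\xi_K$ and $\xi_L$ are cochain maps (which is automatic since they come from simplicial maps) together with the commutation $f\circ \xi_L = \xi_K\circ f'$ at the cochain level, which is precisely the hypothesis that the given square of simplicial maps commutes (with the contravariant flip on cochains).

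For the second assertion, the key tool is the natural short exact sequence $0\rightarrow C^{\bullet-1}_b(L)\rightarrow C^\bullet_b(f\colon L\rightarrow K)\rightarrow C^\bullet_b(K)\rightarrow 0$ already mentioned in the excerpt, where the first map sends $v\mapsto (0,v)$ and the second sends $(u,v)\mapsto u$. Naturality with respect to the square $(\xi_L,\xi_K)$ gives a commutative ladder of short exact sequences with vertical arrows $\xi_L^{\bullet-1}$, $\xi^\bullet$, and $\xi_K^\bullet$. Passing to cohomology yields a commutative ladder of long exact sequences
\[
\begin{tikzcd}[column sep=small]
H^{n-1}_b(L') \arrow[r]\arrow[d,"H(\xi_L)"] & H^n_b(f'\colon L'\to K') \arrow[r]\arrow[d,"H(\xi)"] & H^n_b(K')\arrow[r]\arrow[d,"H(\xi_K)"] & H^n_b(L')\arrow[d,"H(\xi_L)"] \\
H^{n-1}_b(L) \arrow[r] & H^n_b(f\colon L\to K) \arrow[r] & H^n_b(K)\arrow[r] & H^n_b(L).
\end{tikzcd}
\]
By assumption the outer vertical arrows are isomorphisms, so the Five Lemma forces $H^n(\xi)$ to be an isomorphism in every degree.

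The only mildly delicate point is to be careful with the sign conventions and the degree shift in the middle column when writing the ladder (since $C^\bullet_b(f\colon L\to K)$ is built from $C^\bullet_b(K)$ and $C^{\bullet-1}_b(L)$); once this is set up correctly, both the chain-map verification and the Five Lemma application are routine, so I do not expect any substantive obstacle.
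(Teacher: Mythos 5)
Your proof is correct and takes exactly the approach the paper indicates: the paper provides no detailed argument for this lemma, merely remarking that it "is a consequence of the naturality of the short exact sequence $0\rightarrow C^{\bullet-1}_b(L)\rightarrow C^\bullet_b(f\colon L\rightarrow K)\rightarrow C^\bullet_b(K)\rightarrow 0$," and you have filled in precisely that argument, including the direct chain-map verification (which reduces to the cochain-level commutativity $f^n\circ\xi_K^n=\xi_L^n\circ f'^n$, the contravariant dual of the given square) and the Five Lemma applied to the resulting ladder of long exact sequences. The only minor blemish is the notational slip where you write ``$f\circ \xi_L = \xi_K\circ f'$ at the cochain level,'' which does not typecheck as written; the intended relation on bounded cochains is $f^n\circ\xi_K^n=\xi_L^n\circ f'^n$, and your subsequent computation makes clear that this is what you mean.
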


\subsection{Proof of Proposition \ref{prop: relative mapping theorem}}
Let $(X,A)$ be a CW-pair and consider the commutative diagram of simplicial maps (\ref{eq: diagram simplicial maps for pairs}). We know that $i_X$ and $i_A$ induce isomorphisms in bounded cohomology \cite[Corollary 4.9]{FM23}. The same holds for the map $\pi$ \cite[Theorem 4.23]{FM23}. Assume that the kernel of the morphism $\pi_1(A\hookrightarrow X,x)$ is amenable for every $x \in A$. This last assumption is crucial to show that also the map $q$ induces an isomorphism in bounded cohomology \cite[Lemma 8.3]{Cap24}. It follows that every horizontal map of diagram (\ref{eq: diagram simplicial maps for pairs}) induces an isomorphism in bounded cohomology.

In the following, we implicitly invoke the Five Lemma several times to pass from the absolute case to the relative one. We know that the projection $S\colon (|\K(X)|,|\K(A)|)\rightarrow (X,A)$ is a homotopy equivalence both on $|\K(X)|$ and $|\K(A)|$ \cite[Corollary 2.2]{FM23}. If follows that the chain map $S^\bullet\colon C^\bullet_b(X,A)\rightarrow C^\bullet_b(|\K(X)|,|\K(A)|)$ induces an isomorphism in bounded cohomology.
Let $\varphi_\bullet\colon C_\bullet(\K(X))\rightarrow C_\bullet(|\K(X)|)$ denote the natural chain inclusion.
By the Isometry Lemma \cite[Theorem 2]{FM23}, $\varphi_\bullet$ induces an isomorphism in bounded cohomology. Hence the map $\varphi^\bullet\colon C^\bullet_b(|\K(X)|,|\K(A)|)\rightarrow C^\bullet_b(\K(X),\K(A))$ induced by $\varphi_\bullet$ also does.
By Lemma \ref{lemma: mapping cones from pairs}, the chain map $\beta^\bullet\colon C^\bullet_b(\K(X),\K(A))\rightarrow C^\bullet_b(j_\K\colon \K(A)\hookrightarrow \K(X))$ induces an isomorphism in cohomology.
Since every horizontal map of diagram (\ref{eq: diagram simplicial maps for pairs}) induces an isomorphism in bounded cohomology, we can apply Lemma \ref{lemma: naturality of mapping cones} to every square of diagram (\ref{eq: diagram simplicial maps for pairs}). It follows that the chain maps
\begin{align*}
	i^\bullet \colon C^\bullet_b(j_\K\colon \K(A)\hookrightarrow \K(X)) &\rightarrow  C^\bullet_b(j_\Elle\colon \Elle(A)\rightarrow \Elle(X)), \\
	\pi^\bullet \colon C^\bullet_b(j_\A \colon \A(A)\rightarrow \A(X)) &\rightarrow C^\bullet_b(j_\Elle\colon \Elle(A)\rightarrow \Elle(X)), \\
	q^\bullet \colon C^\bullet_b(\A_X(A)\hookrightarrow \A(X))  & \rightarrow C^\bullet_b(j_\A \colon \A(A)\rightarrow \A(X))
\end{align*}
induce isomorphisms in cohomology. Since $\A_X(A)$ is a subcomplex of $\A(X)$, we can apply Lemma \ref{lemma: mapping cones from pairs} to get a chain map $\beta^\bullet\colon C^\bullet_b(\A(X),\A_X(A))\rightarrow C^\bullet_b(\A_X(A)\hookrightarrow \A(X))$, which induces an isomorphism in cohomology.

In conclusion, both the maps $\zeta^\bullet = i^\bullet\circ \beta^\bullet \circ \varphi^\bullet\circ S^\bullet$ and $\vartheta^\bullet = \pi^\bullet\circ q^\bullet\circ \beta^\bullet$ induce isomorphisms in cohomology. For every $n \in \N$, the map $\Psi^n$ is then defined as the composition $H^n(\zeta^\bullet)^{-1}\circ H^n(\vartheta^\bullet)$.

\section{Proof of Theorem \ref{thm: main theorem}}
\label{sec: proof of main theorem}
Let $(X,A)$ be a triangulable pair such that the kernel of the morphism $\pi_1(A\hookrightarrow X, x)$ is amenable for every $x \in A$.
Let $(T,S)$ be a pair of simplicial complexes such that $(X,A)=(|T|,|S|)$.

There is a natural way to construct a copy of $T$ inside $\A(X)$ \cite[Section 6.2]{FM23}.
Inside the singular multicomplex $\K(X)$ we can find a submulticomplex $\K_T(X)\cong T$ whose simplices correspond to equivalence classes of affine parametrizations of simplices of $T$. 
Moreover, we can assume that $\K_T(X)\subseteq \Elle(X)$, by choosing simplices of $\K_T(X)$ as representatives in their homotopy class.
Since $T$ is a simplicial complex, every simplex in $\K_T(X)$ is uniquely determined by its 0-skeleton, hence the quotient map $\pi\colon \Elle(X)\rightarrow \A(X)$ is injective on $\K_T(X)$. Therefore we can construct a copy of $T$ inside $\A(X)$.
The same construction applies to $S$, so that $\K_S(A)\subseteq \Elle(A)$. As before, both $\pi\colon \Elle(A)\rightarrow \A(A)$ and $q\colon \A(A)\rightarrow \A_X(A)$ are injective on $\K_S(A)$. It follows that $S$ sits inside $\A_X(A)\cap T$.
Moreover, the choices which ensure that $\K_T(X)\subseteq \Elle(X)$ and $\K_S(A)\subseteq \Elle(A)$ are clearly compatible with the ones described in Remark \ref{rem: the mapping theorem is not canonical}, so that both the squares of the following diagram are commutative
\begin{equation}
	\label{eq: restriction mapping cones}
	\begin{tikzcd}
		\K_S(A) \arrow[d, hook] \arrow[r, hook] & \Elle(A) \arrow[d, "j_\Elle"] \arrow[r, "i_A", hook] & \K(A) \arrow[d, "j_\K", hook] \\
		\K_T(X) \arrow[r, hook]                 & \Elle(X) \arrow[r, "i_A", hook]                      & \K(X).                        
	\end{tikzcd}
\end{equation}
We can therefore invoke Proposition \ref{prop: relative mapping theorem} to get an isomorphism of vector spaces $\Psi^n \colon H^n_b(\A(X), \A_X(A))\rightarrow H^n_b(X,A)$. To this end, it is crucial that the kernel of the map $\pi_1(A\hookrightarrow X)$ is amenable. This is the only step in our proof in which we use this assumption.
\begin{lemma}
	\label{lemma: vanishing on simplicial complex and comparison map}
	Let $z \in C^n_b(\A(X),\A_X(A))$ be a bounded cocycle which vanishes on $C_n(T)\subseteq C_n(\A(X))$. Then $\comp^n\circ \Psi^n ([z])=0$ in $H^n(X,A)$.
\end{lemma}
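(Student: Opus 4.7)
The plan is to pick any representative $w \in C^n_b(X,A)$ of $\Psi^n([z])$ and to show that its \emph{simplicial restriction} $\rho(w) \in C^n_\Delta(T,S)$, obtained by evaluating $w$ on affine parametrizations of the simplices of $T$ (which vanishes on $S$ because $w$ vanishes on $A$), is a coboundary in the relative simplicial cochain complex. Since $(X,A)=(|T|,|S|)$, the classical restriction $C^\bullet(X,A)\to C^\bullet_\Delta(T,S)$ induces an isomorphism $H^n(X,A)\cong H^n_\Delta(T,S)$, so this will force $\comp^n([w])=0$, hence $\comp^n\circ\Psi^n([z])=0$.

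To extract the needed equations, I would unwind $\Psi^n=H^n(\zeta^\bullet)^{-1}\circ H^n(\vartheta^\bullet)$ applied to $[z]$. The cocycles $\zeta^n(w)=(i_X^\bullet\varphi^\bullet S^\bullet w,0)$ and $\vartheta^n(z)=(\pi^\bullet z,0)$ must represent the same class in $H^n_b(j_\Elle\colon \Elle(A)\to\Elle(X))$, so there exist $u\in C^{n-1}_b(\Elle(X))$ and $v\in C^{n-2}_b(\Elle(A))$ with $\zeta^n(w)-\vartheta^n(z)=\bar\delta^{n-1}(u,v)$. Reading off the two components of the mapping cone differential yields
\[
i_X^\bullet\varphi^\bullet S^\bullet w-\pi^\bullet z=\delta u,\qquad j_\Elle^\bullet u=-\delta v.
\]

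Next I would restrict both equations to the submulticomplexes $\K_T(X)\subseteq\Elle(X)$ and $\K_S(A)\subseteq\Elle(A)$, identified with $T$ and $S$. Two identifications are the heart of the argument. First, the cochain $(i_X^\bullet\varphi^\bullet S^\bullet w)|_{\K_T(X)}$ coincides with $\rho(w)$: chasing through the definitions of $S$, $\varphi$, $i_X$ shows that each simplex of $\K_T(X)$ is evaluated to $w$ of the corresponding affine singular simplex of $T$. Second, $(\pi^\bullet z)|_{\K_T(X)}=0$, since $\pi$ is injective on $\K_T(X)$ with image the chosen copy of $T$ inside $\A(X)$, on which $z$ vanishes by hypothesis. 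Using the commutativity of diagram (\ref{eq: restriction mapping cones}) to see that $j_\Elle$ sends $\K_S(A)$ into $\K_T(X)$ as the standard inclusion, and writing $\bar u=u|_{\K_T(X)}$, $\bar v=v|_{\K_S(A)}$, the two equations become
\[
\delta\bar u=\rho(w)\quad\text{in }C^n_\Delta(T),\qquad \bar u|_S=-\delta\bar v\quad\text{in }C^{n-1}_\Delta(S).
\]

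To conclude, I would extend $\bar v$ to a cochain $\tilde v\in C^{n-2}_\Delta(T)$ by setting $\tilde v(\sigma)=0$ on simplices $\sigma$ not contained in $S$, and define $u'=\bar u+\delta\tilde v\in C^{n-1}_\Delta(T)$. Then $u'|_S=\bar u|_S+\delta\bar v=0$, so $u'\in C^{n-1}_\Delta(T,S)$, and $\delta u'=\delta\bar u=\rho(w)$. Thus $\rho(w)$ is a simplicial coboundary relative to $S$, and the classical isomorphism $H^n_\Delta(T,S)\cong H^n(X,A)$ then gives $\comp^n([w])=0$, as required. The only real obstacle is the careful bookkeeping of the two identifications in the preceding paragraph, for which the compatibility between the constructions of $\Elle(X),\Elle(A)$ and of their submulticomplexes $\K_T(X),\K_S(A)$ (cf.\ Remark \ref{rem: the mapping theorem is not canonical} and diagram (\ref{eq: restriction mapping cones})) is essential.
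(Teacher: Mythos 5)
Your proposal is correct and follows essentially the same route as the paper. Both arguments rest on the compatibility encoded in diagram (\ref{eq: restriction mapping cones}) and on the observation that $z$ vanishing on $C_n(T)$ kills the restriction $r^n_\A(z)$; the paper packages this in a commutative diagram whose vertical arrows induce isomorphisms in cohomology, while you unwind the same diagram at the cochain level and produce the explicit relative primitive $u' = \bar u + \delta\tilde v$, which is the same content presented more concretely.
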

\begin{proof}
	We treat simplicial complexes as specific cases of multicomplexes, using the cohomology theory defined in Section \ref{sec: multicomplexes}.
	The inclusion of pairs $(T,S)\subseteq (\A(X),\A_X(A))$ induces a restriction map $r^n_\A \colon C^n_b(\A(X),\A_X(A))\rightarrow C^n_b(T,S)$. 
	Similarly, by Lemma \ref{lemma: mapping cones from pairs}, the leftmost square of diagram (\ref{eq: restriction mapping cones}) induces a restriction map $r^n_\Elle \colon C^n_b(\Elle(A)\rightarrow \Elle(X))\rightarrow C^n_b(S\hookrightarrow T)$ on mapping cones.
	We consider the following diagram
	\[
	\begin{tikzcd}
		{C^n_b(\A(X),\A_X(A))} \arrow[r, "r^n_\A"] \arrow[d, "\zeta^n"]       & {C^n_b(T,S)} \arrow[r, hook] \arrow[d, "\beta^n"] & {C^n(T,S)} \arrow[d, "\beta^n"] \arrow[r, equal] & {C^n(T,S)}                         \\
		C^n_b(\Elle(A)\rightarrow \Elle(X)) \arrow[r, "r^n_\Elle"] & C^n_b(S\hookrightarrow T) \arrow[r, hook]         & C^n(S \hookrightarrow T)                                       &                                    \\
		{C^n_b(X,A)} \arrow[rrr, hook] \arrow[u, "\vartheta^n"]            &                                                   &                                                                & {C^n(X,A).} \arrow[uu, "\varphi^n"]
	\end{tikzcd}
	\]
	Here hooked arrows denote inclusion of chains, which induce comparison maps in cohomology. The maps $\zeta^n$ and $\vartheta^n$, introduced in Section \ref{sec: bounded cohomology of pairs}, induce isomorphisms in cohomology whose composition defines $\Psi^n$. The maps $\beta^n$, defined in Lemma \ref{lemma: mapping cones from pairs}, and the map $\varphi^n$, induced by the chain inclusion $C_n(T)\rightarrow C_n(|T|)=C_n(X)$, also induce isomorphisms in cohomology, by Lemma \ref{lemma: mapping cones from pairs} and \cite[Theorem 1.11]{FM23} respectively.
	Since $z$ vanishes on $C_n(T)$, we have that $r^n_\A(z)=0$.
	Since the diagram is commutative and every vertical arrow induces isomorphism in cohomology, the statement easily follows.
\end{proof}
Let $\U=\{U_i \,| \, i \in I\}$ be an amenable open cover of $X$ by path-connected subsets satisfying conditions (RC1) and (RC2).
We denote by $V$ the set of vertices of $T$.
For every vertex $v \in V$, the \emph{closed star} of $v$ in $T$ is the subcomplex of $T$ containing all the simplices containing $v$.
By suitably subdividing $T$, we can assume that for every $v \in V$ there exists $i(v)\in I$ such that the closed star of $v$ in $T$ is contained in $U_{i(v)}$ \cite[Theorem 16.4]{munkres1984elements}. Of course the choice of $i(v)$ may not be unique.
For every $i \in I$, we set $V_i = \{v \in V(T)\,|\; i(v)=i\}$.
We consider the group
\[
G = \bigoplus_{i \in I} \Pi_X(U_i,V_i).
\]
Since $U_i$ is amenable in $X$, then $\Pi_X(U_i,V_i)$ is an amenable group by Lemma \ref{lemma: Pi_X(U,V) is amenable}. 
The direct sum of amenable groups is amenable, hence also $G$ is amenable. 
Moreover, since $V_i\cap V_j =\emptyset$ for every $i\neq j$, elements of $\Pi_X(U_i, V_i)$ commute with elements of $\Pi_X(U_j,V_j)$.
It follows that there is a well-defined map from $G$ to $\Pi(X,X)$.
This map is injective because the intersection $\Pi_X(U_i,V_i)\cap \left\langle \Pi_X(U_j,V_j) \colon  j \neq i \right\rangle$ is trivial for every $i \in I$. In fact, non-trivial elements of the former group are represented by paths with endpoints in $V_i$, while non-trivial elements of the latter by paths with endpoints in $\bigcup_{j\neq i} V_i$, which is disjoint from $V_i$.
Therefore $G$ can be identified with a subgroup of $\Pi(X,X)$ and acts on $\A(X)$ accordingly. Notice that this action does not preserve the submulticomplex $\A_X(A)$.
The group
\[
H = \bigoplus_{i \in I} \Pi_X(U_i\cap A,V_i\cap A)
\]
identifies a subgroup of $G$ and induces an action on the pair $(\A(X), \A_X(A))$. Being a subgroup of an amenable group, $H$ is amenable.
The following lemma exploits the regularity conditions (RC1) and (RC2) satisfied by the open cover.

\begin{lemma}
	\label{lem: G has orbits induced by H}
	The action $G\acts \A(X)$ has orbits in $\A_X(A)$ induced by $H$.
\end{lemma}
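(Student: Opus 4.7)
The plan is to verify the defining property directly: given an algebraic simplex $\sigma$ of $\A_X(A)$ and $g = (g^i)_{i \in I} \in G$ such that $g \cdot \sigma$ also lies in $\A_X(A)$, I will construct $h = (h^i)_{i \in I} \in H$ with $h \cdot \sigma = g \cdot \sigma$. Since simplices of $\A(X)$ are, by construction, determined by their $1$-skeleton, it suffices to arrange that $g$ and $h$ induce the same permutation on the vertices of $\sigma$ and act identically on its edges.

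A vertex of $\sigma$ not lying in $V$ is fixed by every element of $G$ and of $H$, so no work is required there. For a vertex $v$ of $\sigma$ lying in some $V_i$ (necessarily with $i = i(v)$ and $v \in V_i \cap A$), pick a representative family $\{\gamma^i_w\}_{w \in V_i}$ of $g^i$. Because $v \in A$ and the image $g \cdot v = \gamma^i_v(1)$ is a vertex of $g \cdot \sigma \in \A_X(A)$, both endpoints of $\gamma^i_v$ lie in $U_i \cap A$. This is exactly the situation addressed by Lemma \ref{lemma: open covers with RC1 and RC2}: the hypotheses (RC1) and (RC2) combined produce a path $\lambda_v$ in $U_i \cap A$ homotopic in $X$ to $\gamma^i_v$ relative to the endpoints.

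To assemble $h$, I define each component $h^i \in \Pi_X(U_i \cap A, V_i \cap A)$ by prescribing the path $\lambda_v$ at the finitely many vertices $v$ of $\sigma$ with $i(v) = i$, and then completing the resulting partial assignment on $V_i \cap A$ to an actual bijection with finite support, choosing the extra interpolating paths arbitrarily inside $U_i \cap A$. This is where the path-connectedness of $U_i \cap A$ guaranteed by (RC1) is used a second time. The resulting $h = (h^i)_{i \in I}$ lies in $H$, and at each vertex of $\sigma$ its path data agrees with that of $g$ up to $X$-homotopy relative to endpoints. Using the explicit formula for the action on edges, namely $g \cdot e = [\bar\gamma^{i(v_0)}_{v_0} * \gamma_e * \gamma^{i(v_1)}_{v_1}]$, one checks that $g$ and $h$ induce identical actions on all vertices and edges of $\sigma$, whence $h \cdot \sigma = g \cdot \sigma$.

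The conceptual core of the argument is entirely concentrated in the application of Lemma \ref{lemma: open covers with RC1 and RC2}; the only real technical nuisance is the bookkeeping needed to complete the partial data on $V_i \cap A$ into a bona fide element of $\Pi_X(U_i \cap A, V_i \cap A)$. Since only finitely many vertices of $\sigma$ are involved and $U_i \cap A$ is path-connected, this bookkeeping is routine, so I do not expect any genuine obstruction beyond carefully invoking Lemma \ref{lemma: open covers with RC1 and RC2}.
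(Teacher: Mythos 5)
Your proof is correct and follows essentially the same route as the paper: both arguments apply Lemma \ref{lemma: open covers with RC1 and RC2} to replace each $\gamma_{v_j}$ (whose endpoints lie in $U_{i_j}\cap A$ because $v_j$ and $g\cdot v_j$ are vertices of $\A_X(A)$) by a path $\lambda_{v_j}$ inside $U_{i_j}\cap A$ that is $X$-homotopic to $\gamma_{v_j}$ rel endpoints, assemble an element $h\in H$ from these paths, and then conclude via the fact that simplices of $\A(X)$ are determined by their $1$-skeleton. The only difference is cosmetic: the paper packages $h$ as a product of transposition-like elements $h_j=\{\lambda_j,\bar\lambda_j\}$, while you are more explicit in completing the partial assignment on $V_i\cap A$ to a finite-support bijection, correctly noting that this second use of the path-connectedness of $U_i\cap A$ (from (RC1)) is what makes the completion possible.
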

\begin{proof}
	Let $\sigma = (\Delta,(x_0,\dots, x_n))$ be an algebraic $n$-simplex of $\A_X(A)$ and let $g \in G$ be such that $g\cdot \sigma$ is an algebraic $n$-simplex of $\A_X(A)$. We need to show that there exists an element $h \in H$ such that $h\cdot \sigma = g\cdot \sigma$.
	Let $\{v_0,\dots, v_k\}$ be the vertices of $\Delta$, where $k\leq n$, and let $\{\gamma_x\}_{x \in X}$ be any representative of $g$.
	Since $v_j$ and $g\cdot v_j$ are both vertices of $\A_X(A)$ (hence points of $A$), by the definition of the action $G \acts \A(X)$, we have that $\gamma_{v_j}$ has both endpoints in $A$.
	Moreover, by the definition of $G$, we have that $\gamma_{v_j}$ is supported in some element of $\U$. 
	Hence there are $i_0,\dots,i_k \in I$ such that $\gamma_{v_j}\colon [0,1]\rightarrow U_{i_j}$ has both endpoints in $U_{i_j}\cap A$.
	Since the open cover $\U$ satisfies (RC1) and (RC2), by Lemma \ref{lemma: open covers with RC1 and RC2} there exist paths $\lambda_{v_j}\colon [0,1] \rightarrow U_{i_j}\cap A$, $j \in \{0,\dots, k\}$, such that $\lambda_{v_j}$ is homotopic to $\gamma_{v_j}$ in $X$ relative to the endpoints.
	Therefore, if we set $h_j = \{\lambda_j\}$ if $\lambda_j$ is a loop, and $h_j=\{\lambda_j,\bar{\lambda}_j\}$ otherwise, then $h=\oplus_{j=0}^k h_j$ defines an element of $H$. 
	It is easy to check that $h\cdot e = g\cdot e$, for every edge $e$ of $\Delta$, hence $g\cdot \Delta^1=h\cdot \Delta^1$, where $\Delta^1$ denotes the 1-skeleton of $\Delta$. Since $\A(X)$ is an aspherical multicomplex, this is indeed sufficient to conclude \cite[Proposition 3.30]{FM23}.
\end{proof}

Let $n \geq \mult(\U)$ and let $\alpha \in H^n_b(X,A)$.
Since  $\Psi^n$ is an isomorphism, we can take a bounded coclass $\beta \in H^n_b(\A(X),\A_X(A))$ such that $\Psi^n(\beta)=\alpha$.
Let now $z \in C^n_b(\A(X),\A_X(A))$ be an alternating cocycle representing $\beta$.
The actions $G\acts \A(X)$ and $H\acts \A_X(A)$ are both by amenable groups and by automorphisms which are simplicially homotopic to the identity \cite[Theorem 5.3]{FM23} \cite[Proposition 6.1]{Cap24}.
It follows from these two facts that the inclusion maps $C^\bullet_b(\A(X))^G\hookrightarrow C^\bullet_b(\A(X))$ and $C^\bullet_b(\A_X(A))^H\hookrightarrow C^\bullet_b(\A_X(A))$ induce isomorphisms in cohomology \cite[Theorem 4.21]{FM23}. 
Moreover, since the action $G\acts \A(X)$ has orbits in $\A_X(A)$ induced by $H$, by Lemma \ref{lemma: invariant cochains} we can assume that $z$ is alternating and $G$-invariant.
By Lemma \ref{lemma: vanishing on simplicial complex and comparison map}, it suffices to show that $z$ vanishes on every algebraic $n$-simplex of $C_n(T)\subseteq C_n(\A(X))$.

Let $(\Delta,(x_0,\dots,x_n))$ be an algebraic $n$-simplex in $C_n(T)\subseteq C_n(\A(X))$.
If $x_h=x_k$, for some $h\neq k$, then $z(\Delta,(x_0,\dots,x_n))=0$, since $z$ is alternating.
We assume therefore that the points $x_0,\dots,x_n$ are pairwise distinct.
By construction, the cover of $X$ by the closed star of vertices of $T$ is a refinement of $\U$. 
Since $\Delta$ is an $n$-simplex of $T$ and since $n\geq \mult(\U)$, there are at least two vertices of $\Delta$ belonging to the same $V_i$. In other words, there exist $h\neq k \in \{0,\dots, n\}$ such that $i(x_h)=i(x_k)$.
Let $e$ denote the edge of $\Delta$ joining $x_h$ with $x_k$.
By assumption, the closed stars of $x_k$ and $x_h$ are both contained in $U_i$, therefore the edge $e$ of $\A(X)$ (which is also an edge of $\Elle(X)^1=\A(X)^1$, hence of $\K(X)$) projects via $S_X\colon |\K(X)|\rightarrow X$ to a path $\gamma \colon [0,1]\rightarrow U_i$ with endpoints $x_h$ and $x_k$. 
As a consequence, if we consider $g=\{\gamma,\bar{\gamma}\}\in \Pi_X(U_i,V_i)<G$, it is easy to check that $g\cdot \Delta = \Delta$, $g\cdot x_h = x_k$, $g\cdot x_k = x_h$ and $g\cdot x_j= x_j$, for every $j \neq h,k$. 
Since $z$ is $G$-invariant, we obtain $z(\Delta, (x_0,\dots, x_n))= z(g\cdot (\Delta, (x_0,\dots, x_n)))$, while, since $z$ is alternating, we have $z(\Delta, (x_0,\dots, x_n))= - z(g\cdot (\Delta, (x_0,\dots, x_n)))$.
Therefore $z(s,(x_0,\dots,x_n))=0$ and this concludes the proof.
\begin{rem}
	\label{rem: role of T and S}
	In our proof of Theorem \ref{thm: main theorem} we work with a pair of \emph{simplicial complexes} $(T,S)$ for two main technical reasons. The first is to exploit the barycentric techniques to relate stars of $T$ with the open cover $\U$. The second is to build a copy of $(X,A)$ inside $(\A(X),\A_X(A))$, on which $G$ acts. 
\end{rem}
\begin{rem}
	\label{rem: relation with Kuessner paper}
	We discuss the role of conditions (RC1) and (RC2) in our context.
	In fact, in the light of Lemma \ref{lem: G has orbits induced by H}, the only reason to introduce these hypothesis is that, under these regularity assumptions, one can show that the action of $G$ on $\A(X)$ has orbits in $\A_X(A)$ induced by $H$ (Lemma \ref{lem: G has orbits induced by H}). This in turns allows to consider $G$-invariant cochains in the relative setting (Lemma \ref{lemma: invariant cochains}).
	In fact, since the action of $G$ on $\A(X)$ does not preserve $\A_X(A)$, some care is needed when passing to invariant cochains.
	Assume, on the contrary, that one could work with $G$-invariant chains regardless of any regularity condition of the action of $G$ on $\A_X(A)$.
	This assumptions seem implicit in Kuessner's work on relative bounded cohomology via multicomplexes (see for example \cite[Corollary 2]{kuessner2015multicomplexes}).
	In this case, using the same argument above, one could prove that the comparison map $\comp^n\colon H^n_b(X,A)\rightarrow H^n(X,A)$ vanishes for every $n\geq \mult(\U)$, where $\U$ is \emph{any} amenable open cover of $X$.
	This is obviously false: if $(X,A)=(M,\partial M)$ for some smooth $m$-manifold $M$ with non-empty boundary, then we know that $M$ is homotopy equivalent a subcomplex of dimension at most $m-1$, therefore it admits a contractible (hence amenable) open cover of cardinality at most $m$ \cite[Remark 2.8]{capovilla2022amenable}; by the standard duality between bounded cohomology and simplicial volume \cite[Proposition 7.10]{frigerio2017bounded}, it would follow that $\|M,\partial M\|=0$. This gives a contradiction, for example, if the interior of $M$ admits a complete finite-volume hyperbolic metric \cite{gromov1982volume}.
\end{rem}

\section{Proof of Theorem \ref{thm: relative vanishing theorem: rel mult and nerves}}
\label{sec: proof of relative mult and nerves theorem}
Let $(X,A)$ be a triangulable pair such that the kernel of the morphism $\pi_1(A\hookrightarrow X, x)$ is amenable for every $x \in A$. 
Let $\U$ be an amenable open cover of $X$ by path-connected subsets satisfying (RC1) and (RC2). We keep the notation from the previous section.

\subsection{Proof of (1)}
We assume that $\U$ is \emph{weakly-convex on $A$}.
Using the fact that $\U$ is locally-finite, up to subdividing $T$, we may suppose that $V_i\cap A\neq \emptyset$ for every $i \in I$ such that $U_i\cap A \neq \emptyset$.
Let $n\geq \mult_A(\U)$. We need to show that the comparison map $\comp^n\colon H^n_b(X,A)\rightarrow H^n(X,A)$ vanishes.
Let $\alpha \in H^n_b(X,A)$. Recall that we have the isomorphism $\Psi^n$,
hence we can take a bounded coclass $\beta \in H^n_b(\A(X),\A_X(A))$ such that $\Psi^n(\beta)=\alpha$.
Let $z \in C^n_b(\A(X),\A_X(A))$ be a cocycle representing $\beta$. As in in the previous section, we can assume that $z$ is alternating and $G$-invariant. By Lemma \ref{lemma: vanishing on simplicial complex and comparison map}, we need to show that $z$ vanishes on every algebraic $n$-simplex of $C_n(T)\subseteq C_n(\A(X))$.

Let $(\Delta,(x_0,\dots, x_n))$ be an algebraic $n$-simplex in $C_n(T)$. 
Since $z$ is alternating, we can assume that the points $x_0,\dots, x_n$ are pairwise-distinct.
We set $i_j = i(x_j)$ for every $j \in \{0,\dots, n\}$.
If there exist $h\neq k \in \{0,\dots, n\}$ such that $i_h=i_k$, then we can argue as in the previous section to show that $z(\Delta,(x_0,\dots, x_n))=0$. We assume therefore that $i_0,\dots i_n$ are pairwise-distinct. 

Since $n\geq \mult_A(\U)$ and since by construction $\bigcap_{j =0}^n U_{i_j}\neq \emptyset$, we deduce that $U_{i_0}\cap\dots\cap U_{i_n}\cap A$ is nonempty. 
Hence, since $\U$ is weakly-convex on $A$, the path-connected component of $U_{i_0}\cap\dots\cap U_{i_n}$ containing $\Delta$ intersects $A$.
Therefore, if $x$ denotes a barycenter of $\Delta$, there exists $x' \in U_{i_0}\cap\dots\cap U_{i_n}\cap A$ and a path $\lambda\colon [0,1]\rightarrow U_{i_0}\cap\dots\cap U_{i_n}$ from $x'$ to $x$.
Let $e_{hk}$ denote the oriented 1-simplex of $\Delta$ from $x_h$ to $x_k$. 
We denote by $\delta_j\colon [0,1]\rightarrow U_{i_0}\cap\dots\cap U_{i_n}$ a path from $x_j$ to $x$ supported in $\Delta$ such that $\delta_h*\bar{\delta}_k$ is homotopic in $X$ relative to the endpoints to a parametrization of $e_{hk}$.
Since $V_{i_j}\cap A \neq \emptyset$, for every $j \in \{0,\dots, n\}$ there exists $x_j' \in V_{i_j}\cap A$. Moreover, by (RC1), $U_{i_j}\cap A$ is path-connected, hence we can find continuous paths $\xi_j\colon [0,1]\rightarrow U_{i_j}\cap A$ from $x_j'$ to $x'$.
For $0\leq h < k \leq n$, we then set $\xi_{hk} = \xi_h * \bar{\xi}_k \colon [0,1]\rightarrow A$.
Since $x_0',\dots, x_n'$ are pairwise-distinct, by definition of $\A_X(A)$, there exists a unique oriented 1-simplex $e_{hk}'$ of $\A_X(A)$ whose projection is a path in $A$ which is homotopic to $\xi_{hk}$ in $X$ relative to the endpoints.
\begin{lemma}
	\label{lem: there exists a simplex in A_X(A)}
	There exists a simplex $\Delta'$ of $\A_X(A)$ whose 1-skeleton is given by the union of the $e_{hk}'$.
\end{lemma}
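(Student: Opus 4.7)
The plan is to produce an explicit singular $n$-simplex $\sigma\colon \Delta^n \to A$ whose vertices are $x'_0,\dots, x'_n$ and whose $1$-skeleton realizes the paths $\xi_{hk}$. From such a $\sigma$ we will obtain, via the projection $\K(A) \to \A(A)$ and the map $j_\A$, the desired simplex $\Delta'$ of $\A_X(A)$.

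To construct $\sigma$, I would use the barycentric subdivision of $\Delta^n$ together with the paths $\xi_j$ and the basepoint $x'$. Denote by $b_S$ the barycenter of the face of $\Delta^n$ spanned by $\{e_j : j \in S\}$, for every nonempty $S \subseteq \{0,\dots, n\}$. Set $\sigma(b_{\{j\}}) := x'_j$ and $\sigma(b_S) := x'$ when $|S| \geq 2$. On each small simplex of the barycentric subdivision, corresponding to a strictly increasing chain $S_0 \subsetneq \dots \subsetneq S_k$, I would extend $\sigma$ as follows: if $S_0 = \{j_0\}$, map the small simplex into the image of $\xi_{j_0}$ using the barycentric coordinate of $b_{S_0}$ as the parameter (so that $b_{S_0}$ maps to $\xi_{j_0}(0) = x'_{j_0}$ and every other vertex maps to $\xi_{j_0}(1) = x'$); if $|S_0| \geq 2$, map the whole small simplex to the constant point $x'$. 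Routine checks of agreement on shared faces of adjacent small simplices, which amount to examining chains differing by the insertion or removal of one element, show that $\sigma$ is a well-defined continuous map into $A$. Moreover, each edge $[e_h, e_k]$ of $\Delta^n$ is subdivided into $[b_{\{h\}}, b_{\{h,k\}}]$ and $[b_{\{h,k\}}, b_{\{k\}}]$, on which $\sigma$ restricts respectively to $\xi_h$ and to $\bar{\xi}_k$, so that the entire edge maps to $\xi_h * \bar{\xi}_k = \xi_{hk}$.

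From $\sigma$, the machinery for passing between $\K(A)$, $\Elle(A)$ and $\A(A)$ (exploiting the homotopy equivalence $|\Elle(A)| \hookrightarrow |\K(A)|$ of \cite[Theorem 3.23]{FM23} and the projection $\pi\colon \Elle(A) \to \A(A)$) yields a simplex $\tilde{\Delta}' \in \A(A)$ whose edges are represented, in $A$, by the homotopy classes of the $\xi_{hk}$. Setting $\Delta' := j_\A(\tilde{\Delta}')$, we obtain a simplex of $\A_X(A)$ whose edges in $\A(X)^1$ correspond to the homotopy classes in $X$ rel endpoints of the $\xi_{hk}$. By the uniqueness property characterizing each $e'_{hk}$, the $1$-skeleton of $\Delta'$ is precisely the union of the $e'_{hk}$, as required.

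The main obstacle is the explicit construction of $\sigma$ on the barycentric subdivision and the careful verification of well-definedness on shared faces; the passage from $\sigma$ to the simplex $\Delta'$ of $\A_X(A)$ is straightforward given the framework established in Section \ref{sec: bounded cohomology of pairs}.
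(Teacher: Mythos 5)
Your argument is correct but takes a more constructive detour than the paper's. You build an explicit singular filling $\sigma\colon\Delta^n\to A$ by coning off the 1-skeleton at the common point $x'$ over the barycentric subdivision, then invoke the passage $\K(A)\to\Elle(A)\to\A(A)$ to extract $\Delta'$. The paper short-circuits this: since $\xi_{hk}=\xi_h*\bar\xi_k$, each triangular loop $\xi_{jh}*\xi_{hk}*\bar\xi_{jk}$ is trivially null-homotopic in $A$, so the corresponding simplicial 2-loop $e'_{jh}*e'_{hk}*\bar e'_{jk}$ is null-homotopic in $|\A(A)|$ by \cite[Proposition 3.33]{FM23}, and the filling $n$-simplex then exists (uniquely) in the aspherical multicomplex $\A(A)$ by \cite[Proposition 3.30]{FM23}. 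Both arguments rest on the same observation --- all the paths factor through $x'$ --- and, crucially, your ``machinery'' step at the end is not a formal consequence of the homotopy equivalence $|\Elle(A)|\hookrightarrow|\K(A)|$: extracting a simplex of $\A(A)$ with the prescribed 1-skeleton from your $\sigma$ uses precisely the same aspherical-filling lemma \cite[Proposition 3.30]{FM23}. So your explicit $\sigma$ is a concrete witness to a null-homotopy that the paper reads off for free from the form of the $\xi_{hk}$, and both proofs ultimately invoke the same FM23 ingredients; the paper's route simply avoids the barycentric construction and the accompanying well-definedness checks on shared faces.
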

\begin{proof}
	Notice that the $e_{hk}'$ can be considered to be 1-simplices of $\A(A)$. Of course, the loop $\xi_{jh}*\xi_{hk}*\bar{\xi}_{jk}$ is null-homotopic in $A$ (hence in $X$). By \cite[Proposition 3.33]{FM23} the concatenation of oriented simplices $e_{jh}'*e_{hk}'*\bar{e}_{jk}'$ is null-homotopic in $|\A(A)|$. By \cite[Proposition 3.30]{FM23}, it follows that there exists a unique $n$-simplex $\Delta''$ of $\A(A)$ whose 1-skeleton is the union of the $e_{hk}'$. In conclusion, $\Delta'$ denotes just the image of $\Delta''$ in $\A_X(A)$.
\end{proof}

For every $j \in \{0,\dots, n\}$ we set $\gamma_j=\xi_j * \lambda * \bar{\delta}_j$. By construction $\gamma_j$ joins $x_j'$ with $x_j$ and is supported on $U_{i_j}$.
Therefore, if we set $g_j = \{\gamma_j\}$ if $x_j = x_j'$, and $g_j=\{\gamma_j,\bar{\gamma}_j\}$ if $x_j\neq x_j'$, then $g=\oplus_{j=0}^n g_j$ defines an element of $G$. 
It is clear that  $g\cdot e_{hk}' = e_{hk}$. Since simplices of $\A(X)$ are determined by their 1-skeleton \cite[Proposition 3.30]{FM23}, we obtain $g\cdot \Delta' = \Delta$.
Hence we have that $z(\Delta,(x_0,\dots, x_n)) = z(\Delta',(x_0',\dots, x_n'))$, since $z$ is $G$-invariant, and $z(\Delta',(x_0',\dots, x_n'))=0$, since $z$ vanishes on simplices of $\A_X(A)$. This concludes the proof of (1).

\subsection{Proof of (2)}
Assume that $\U$ is \emph{convex}.
We denote by $N(\U)$ the nerve of $\U$ and by $N(\U_A)$ the nerve of the open cover of $A$ induced by $\U$. This is a well-defined subcomplex of $N(\U)$ under our assumptions.
Given a partition of unity subordinated to $\U$, one can construct a nerve map $\nu\colon X\rightarrow |N(\U)|$, which is unique up to homotopy and which induces a well-defined map of pairs $\nu \colon (X,A)\rightarrow (|N(\U)|,|N(\U_A)|)$.
By assumption, we know that $\U$ is locally-finite.
Therefore, up to taking further subdivisions of $T$, we may suppose that $V_i\neq \emptyset$, for every $i \in I$, and $V_i\cap A\neq \emptyset$ for every $i \in I$ such that $U_i\cap A \neq \emptyset$.
We need to show that there exists a map $\Theta^n\colon H^n_b(X,A)\rightarrow H^n(N(\U),N(\U_A))$ such that the following diagram is commutative:
\begin{equation}
	\label{eq: diagram nerve theorem}
	\begin{tikzcd}
		{H^n_b(X,A)} \arrow[r, "\comp^n"] \arrow[d, "\Theta^n"] & {H^n(X,A)}                                      \\
		{H^n(N(\U),N(\U_A))} \arrow[r, "\cong"]                 & {H^n(|N(\U)|,|N(\U_A)|).} \arrow[u, "H^n(\nu)"']
	\end{tikzcd}
\end{equation}
Recall from the previous sections that the relative bounded cohomology of $(\A(X), \A_X(A))$ may be computed by the complex $C^\bullet_b(\A(X),\A_X(A))^{G}_\alt$ of $G$-invariant alternating cochains which vanish on simplices supported on $\A_X(A)$ (see Lemma \ref{lemma: invariant cochains} and Lemma \ref{lem: G has orbits induced by H}).
Therefore, in order to define $\Theta^n$, we first construct chain maps $\Omega_X^\bullet$ and $\Omega_A^\bullet$ such that the following diagram is commutative
\begin{equation}
	\label{eq: diagram nerve theorem, new}
	\begin{tikzcd}
		C^\bullet_b(\A(X))^G_\alt \arrow[r] \arrow[d, "\Omega_X^\bullet"] & C^\bullet_b(\A_X(A))^H_\alt \arrow[d, "\Omega_A^\bullet"] \\
		C^\bullet_b(N(\U)) \arrow[r]                                      & C^\bullet_b(N(\U_A)),                                    
	\end{tikzcd}
\end{equation}
where the horizontal arrows are restriction maps.
The map $\Omega_X^n$ can be constructed \emph{verbatim} as in \cite[Section 6.4]{FM23}. 
The construction of $\Omega_A^n$ goes as follows.
We identify the set of vertices of $N(\U_A)$ with those indices $i \in I$ such that $U_i \cap A \neq \emptyset$.
Let $z \in C^n_b(\A_X(A))^H_\alt$ and let $(i_0,\dots, i_n) \in N_A(\U)$.
If there exists $h\neq k \in \{0,\dots, n\}$ such that $i_h=i_k$, we set $\Omega_A^n(z)(i_0,\dots, i_n)=0$.
Otherwise, by definition of the nerve $N(\U_A)$, we have that $U_{i_0}\cap \dots \cap U_{i_n} \cap A \neq \emptyset$, hence we may choose a point $q \in U_{i_0}\cap \dots \cap U_{i_n} \cap A$.
Moreover, since $V_{i_j}\cap A \neq \emptyset$ for every $j \in \{0,\dots, n\}$, we can pick a point $v_{i_j}\in V_{i_j}\cap A$. We know by (RC1) that $U_{i_j}\cap A$ is path-connected. Hence there exists a path $\alpha_j \colon [0,1]\rightarrow U_{i_j}\cap A$ from $v_{i_j}$ to $q$.
For $0\leq h< k \leq n$, we set $\alpha_{hk}=\alpha_h *\bar{\alpha}_k$.
Since $v_{i_0},\dots, v_{i_n}$ are pairwise distinct, by definition of $\A_X(A)$, there exists a unique oriented 1-simplex $e_{hk}$ of $\A_X(A)$ whose projection on $A$ is a path which is homotopic to $\alpha_{hk}$ in $X$ relative to the endpoints.
Using the same argument of Lemma \ref{lem: there exists a simplex in A_X(A)}, there exists a unique $n$-simplex $\Delta$ of $\A_X(A)$ whose 1-skeleton is given by the union of the $e_{hk}$.
We then set $\Omega^n_A(z)(i_0,\dots, i_n)=z(\Delta, (v_{i_0},\dots, v_{i_n}))$.

We need to show that $\Omega^n_A(z)$ is well-defined, i.e. different choices in the construction lead to the same value for $z(\Delta, (v_{i_0},\dots, v_{i_n}))$. Let $q' \in U_{i_0}\cap \dots \cap U_{i_n} \cap A$, let $v_{i_j}'\in V_{i_j}\cap A$, $j \in \{0,\dots, n\}$, and let $\alpha_j' \colon [0,1]\rightarrow U_{i_j}\cap A$ be a path from $v_{i_j}'$ to $q'$. We set $\alpha_{hk}'=\alpha_h' *\bar{\alpha}_k'$ and we take $e_{hk}'$ to be the unique edge of $\A_X(A)$ whose projection is a path in $A$ which is homotopic to $\alpha_{hk}'$ in $X$ relative to the endpoints. Finally, we take $\Delta'$ to be the $n$-simplex of $\A_X(A)$ whose 1-skeleton is given by the $e_{hk}'$.
Since $U_{i_0}\cap \dots \cap U_{i_n}$ is path-connected (by convexity of $\U$), there exists a path $\beta \colon [0,1]\rightarrow \bigcap_{j=0}^n U_{i_j}$ from $q$ to $q'$. 
By construction, the path $\gamma_j\coloneqq \alpha_j * \beta * \bar{\alpha}_j$ is supported on $U_{i_j}$ and joins $v_{i_j}$ with $v_{i_j}'$, which are both points of $A$. 
By the regularity conditions (RC1) and (RC2), we deduce from Lemma \ref{lemma: open covers with RC1 and RC2} that, for every $j \in \{0,\dots, n\}$, there exists a path $\lambda_j\colon [0,1]\rightarrow U_{i_j}\cap A$ which is homotopic to $\gamma_j$ in $X$ relative to the endpoints. 
Therefore, if we set $h_j = \{\lambda_j\}$ if $\lambda_j$ is a loop, and $h_j=\{\lambda_j,\bar{\lambda}_j\}$ otherwise, then $h=\oplus_{j=0}^n h_j$ defines an element of $H$.
It is straightforward to check that $h\cdot e_{hk}'=e_{hk}$ for every $h\neq k$, which implies that $h\cdot \Delta' = \Delta$. 
Since $z$ is $H$-invariant, it follows that $z(\Delta, (v_{i_0},\dots, v_{i_n}))= z(\Delta', (v_{i_0}',\dots, v_{i_n}'))$ i.e. that $\Omega_A^n$ is indeed well-defined.

It is easy to check that (\ref{eq: diagram nerve theorem, new}) is commutative and that $\Omega_A^\bullet$ defines a chain map.
From (\ref{eq: diagram nerve theorem, new}), we get a chain map \[\Omega^\bullet\colon C^\bullet_b(\A(X),\A_X(A))^G_\alt \rightarrow C^\bullet(N(\U),N(\U_A)),\] whose induced map in cohomology, precomposed with the inverse of $\Psi^n$, defines $\Theta^n$.
The commutativity of (\ref{eq: diagram nerve theorem}) can be checked \emph{verbatim} as in the absolute case (see \cite[Section 6.4]{FM23}). This concludes the proof of (2).

\bibliographystyle{alpha}
\bibliography{multicomplexes}

\end{document}